\NeedsTeXFormat{LaTeX2e}
\documentclass[11pt,a4paper,twoside]{amsart}
\usepackage[top=1in, bottom=1.25in, marginparwidth=0.8in, inner=1in, outer=1in]{geometry}
\usepackage{lmodern}
\usepackage[T2A,T1]{fontenc} 
\usepackage[utf8]{inputenc}
\usepackage[UKenglish]{babel}
\usepackage{yfonts}
\usepackage{etoolbox}
\usepackage{pinlabel}

\makeatletter
\newcommand*{\math@version@bold}{bold}
\DeclareMathOperator\DD{
  \textrm{%
    \usefont{T2A}{cmr}{\ifx\math@version\math@version@bold bx\else m\fi}{n}%
    \CYRD
  }%
} 
\makeatother

\usepackage{float}
\usepackage{graphicx}
\usepackage{xcolor}

\colorlet{myblue}{black}
\colorlet{gold}{yellow!90!black!70!red}
\colorlet{darkgreen}{green!70!black}
\colorlet{lightred}{red!30!white}
\colorlet{lightblue}{blue!30!white}

\usepackage{rotating}
\usepackage{tikz}
\usepackage{tikz-cd}  
\usetikzlibrary{arrows,arrows.meta}
\tikzcdset{arrow style=tikz, diagrams={>=latex}}
\usetikzlibrary{shapes.multipart}
\tikzset{>=latex}
\tikzset{
    labl/.style={anchor=south, rotate=90, inner sep=.5mm}
}


\usepackage{caption}
\setlength{\captionmargin}{18pt}
\usepackage[labelfont=up,margin=5pt]{subcaption}
\usepackage{wrapfig}
\setlength\intextsep{0pt}
\newcommand\mystretch{\baselinestretch}
\newcommand{\myfixwrapfig}{\textcolor{white}{~}\vspace{-\mystretch\baselineskip\relax}}
\usepackage[section]{placeins}

\usepackage{setspace} 
\singlespacing
\sloppy
\raggedbottom

\usepackage[bookmarks=true,pdfencoding=auto,bookmarksdepth=3]{hyperref}
\usepackage{bookmark}
\hypersetup{colorlinks=true,citecolor=black,urlcolor=myblue,linkcolor=black}


\usepackage{amsmath}
\usepackage{amsfonts}
\usepackage{amssymb}
\usepackage{amsthm}
\usepackage{mathtools}
\usepackage{mathrsfs}
\usepackage{stmaryrd}
\mathtoolsset{mathic=true}

\def\co{\colon\thinspace\relax}

\newtheorem{theorem}{Theorem}
\newtheorem*{theorem*}{Theorem}
\newtheorem*{main}{Main Theorem}
\newtheorem{lemma}[theorem]{Lemma}

\newtheorem{corollary}[theorem]{Corollary}
\newtheorem{proposition}[theorem]{Proposition}
\theoremstyle{definition}
\newtheorem{definition}[theorem]{Definition}
\newtheorem{question}[theorem]{Question}

\newtheorem{observation}[theorem]{Observation}
\newtheorem{remark}[theorem]{Remark}

\usepackage[makeroom]{cancel}
\usepackage{enumitem}
\usepackage[normalem]{ulem} 


\newcommand{\vc}[1]{\vcenter{\hbox{#1}}}%
\newcommand{\mypic}[3]{%
  \newcommand{#3}{%
    \vc{%
      \includegraphics[page=#2]%
        {PSTricks/KhCurves-pics-#1-pics.pdf}%
      }%
  }%
}%

\newcommand{\littlemor}[1]{%
  \left(%
    \vc{%
      \tikzcdset{diagrams={nodes={inner sep=2pt}}}%
      \tikzstyle{litARR}=[thick]%
			\tikzstyle{litarr}=[dotted]%
      \tikzstyle{litlab}=[fill=white,inner sep=0pt,rounded corners]%
      \begin{tikzcd}[column sep=1.25cm,row sep=0.5cm,ampersand replacement = \&, labels=description]
      #1
    \end{tikzcd}%
    }%
  \right) 
}


\mypic{Cobordisms}{1}{\tube}
\mypic{Cobordisms}{2}{\Torus}
\mypic{Cobordisms}{3}{\plane}
\mypic{Cobordisms}{4}{\planedot}

\mypic{Cobordisms}{5}{\planedotdot}
\mypic{Cobordisms}{6}{\Sphere}
\mypic{Cobordisms}{7}{\Spheredot}
\mypic{Cobordisms}{8}{\Spheredotdot}
\mypic{Cobordisms}{9}{\Spheredotdotdot}
\mypic{Cobordisms}{10}{\DiscT}
\mypic{Cobordisms}{11}{\DiscB}
\mypic{Cobordisms}{12}{\DiscR}
\mypic{Cobordisms}{13}{\DiscL}
\mypic{Cobordisms}{14}{\DiscTdot}
\mypic{Cobordisms}{15}{\DiscBdot}
\mypic{Cobordisms}{16}{\DiscRdot}
\mypic{Cobordisms}{17}{\DiscLdot}

\mypic{Cobordisms}{18}{\PairsOfPantsL}
\mypic{Cobordisms}{19}{\PairsOfPantsR}
\mypic{Cobordisms}{20}{\TuB}
\mypic{Cobordisms}{20,angle=180,origin=c}{\TuT}
\mypic{Cobordisms}{20,angle=-90,origin=c}{\TuL}
\mypic{Cobordisms}{20,angle=90,origin=c}{\TuR}
\mypic{Cobordisms}{21,angle=0,origin=c}{\TuDotA}
\mypic{Cobordisms}{21,angle=90,origin=c}{\TuDotB}
\mypic{Cobordisms}{21,angle=180,origin=c}{\TuDotC}
\mypic{Cobordisms}{21,angle=270,origin=c}{\TuDotD}
\mypic{Cobordisms}{22}{\TuNoDot}
	
\mypic{Cobordisms}{23}{\TorusRelI}
\mypic{Cobordisms}{24}{\TorusRelII}
\mypic{Cobordisms}{25}{\TorusRelIII}
\mypic{Cobordisms}{26}{\TorusRelIV}
\mypic{Cobordisms}{27}{\Star}
\mypic{Cobordisms}{28}{\StarH}
\mypic{Cobordisms}{29}{\StarDot}
\mypic{Cobordisms}{30}{\StarSheet}
\mypic{Cobordisms}{31}{\StarSheetH}
\mypic{Cobordisms}{32}{\StarTube}
\mypic{Cobordisms}{33}{\StarTubeTube}
\mypic{Cobordisms}{34}{\StarDotSameComp}
\mypic{Cobordisms}{35}{\StarNeck}
\mypic{Cobordisms}{36}{\StarNeckI}
\mypic{Cobordisms}{37}{\StarNeckII}
\mypic{Cobordisms}{38}{\StarNeckIII}
\mypic{Cobordisms}{39}{\StarNeckIp}
\mypic{Cobordisms}{40}{\StarNeckIIp}
\mypic{Cobordisms}{41}{\StarNeckIIIp}
\mypic{Cobordisms}{42}{\NoStarSheetH}
\mypic{Cobordisms}{43}{\NoStarSheetHRev}
\mypic{Cobordisms}{44}{\NoStarTube}
\mypic{Cobordisms}{45}{\StarDotRev}
\mypic{Cobordisms}{46}{\PairsOfPantsLAst}
\mypic{Cobordisms}{47}{\PairsOfPantsRAst}
\mypic{Cobordisms}{48}{\DiscRAst}
\mypic{Cobordisms}{49}{\PairsOfPantsGluedAst}
\mypic{Cobordisms}{50}{\planedotstar}
\mypic{Curves}{1}{\FukayaFigureEightAlg}
\mypic{Curves}{2}{\FukayaHHHKAlg}


\mypic{Inline}{1}{\DotC}
\mypic{Inline}{2}{\DotB}
\mypic{Inline}{3}{\DotCblue}
\mypic{Inline}{4}{\DotBblue}
\mypic{Inline}{5}{\DotCred}
\mypic{Inline}{6}{\DotBred}
\mypic{Inline}{1,scale=0.7}{\smallDotC}
\mypic{Inline}{2,scale=0.7}{\smallDotB}
\mypic{Inline}{3,scale=0.7}{\smallDotCblue}
\mypic{Inline}{4,scale=0.7}{\smallDotBblue}
\mypic{Inline}{5,scale=0.7}{\smallDotCred}
\mypic{Inline}{6,scale=0.7}{\smallDotBred}

\mypic{Inline}{7}{\Circle}
\mypic{Inline}{8}{\resolution}

\mypic{Inline}{9}{\Li}
\mypic{Inline}{10}{\Lo}
\mypic{Inline}{9,scale=0.7}{\smallLi}
\mypic{Inline}{10,scale=0.7}{\smallLo}
\mypic{Inline}{10,angle=270,origin=c}{\Lialt}
\mypic{Inline}{11}{\LiRed}
\mypic{Inline}{12}{\LoRed}
\mypic{Inline}{13}{\LiBlue}
\mypic{Inline}{14}{\LoBlue}
\mypic{Inline}{15}{\Ni}
\mypic{Inline}{16}{\No}
\mypic{Inline}{15,scale=0.7}{\smallNi}
\mypic{Inline}{16,scale=0.7}{\smallNo}
\mypic{Inline}{17}{\Lil}
\mypic{Inline}{18}{\Lol}
\mypic{Inline}{19}{\Nil}
\mypic{Inline}{20}{\Nol}
\mypic{Inline}{21}{\CrossingL}
\mypic{Inline}{22}{\CrossingR}
\mypic{Inline}{23}{\CrossingLDot}
\mypic{Inline}{24}{\CrossingRDot}
\mypic{Inline}{25}{\CrossingPos}
\mypic{Inline}{26}{\CrossingNeg}
\mypic{Inline}{27}{\CrossingPosMarkedi}
\mypic{Inline}{28}{\CrossingNegMarkedi}
\mypic{Inline}{29}{\CrossingPosMarkedii}
\mypic{Inline}{30}{\CrossingNegMarkedii}
\mypic{Inline}{31}{\CrossingPosMarkediii}
\mypic{Inline}{32}{\CrossingNegMarkediii}
\mypic{Inline}{33}{\CrossingPosMarkediv}
\mypic{Inline}{34}{\CrossingNegMarkediv}
\mypic{Inline}{25,angle=270,origin=c}{\CrossingPosR}
\mypic{Inline}{26,angle=270,origin=c}{\CrossingNegR}
\mypic{Inline}{27,angle=270,origin=c}{\CrossingPosMarkediR}
\mypic{Inline}{28,angle=270,origin=c}{\CrossingNegMarkediR}
\mypic{Inline}{29,angle=270,origin=c}{\CrossingPosMarkediiR}
\mypic{Inline}{30,angle=270,origin=c}{\CrossingNegMarkediiR}
\mypic{Inline}{31,angle=270,origin=c}{\CrossingPosMarkediiiR}
\mypic{Inline}{32,angle=270,origin=c}{\CrossingNegMarkediiiR}
\mypic{Inline}{33,angle=270,origin=c}{\CrossingPosMarkedivR}
\mypic{Inline}{34,angle=270,origin=c}{\CrossingNegMarkedivR}

\mypic{Inline}{35}{\LiO}
\mypic{Inline}{36}{\LiOl}
\mypic{Inline}{37}{\LilO}
\mypic{Inline}{38}{\LiOr}
\mypic{Inline}{39}{\LirO}
\mypic{Inline}{40}{\NiO}
\mypic{Inline}{41}{\NiOl}
\mypic{Inline}{42}{\NilO}
\mypic{Inline}{43}{\NiOr}
\mypic{Inline}{44}{\NirO}
\mypic{Inline}{45}{\LiDotL}
\mypic{Inline}{46}{\LiDotR}
\mypic{Inline}{47}{\LoDotT}
\mypic{Inline}{48}{\LoDotB}
\mypic{Inline}{49}{\NiDotL}
\mypic{Inline}{50}{\NiDotR}
\mypic{Inline}{51}{\NoDotT}
\mypic{Inline}{52}{\NoDotB}
\mypic{Inline}{53}{\TwistTwo}
\mypic{Inline}{53,angle=90,origin=c}{\TwistTwoUp}
\mypic{Inline}{54}{\TwistNegTwo}
\mypic{Inline}{55}{\TwistThree}
\mypic{Inline}{56}{\TwistNegThree}
\mypic{Inline}{57}{\TwistTwoDot}
\mypic{Inline}{58}{\TwistNegTwoDot}
\mypic{Inline}{59}{\TwistThreeDot}
\mypic{Inline}{60}{\TwistNegThreeDot}

\mypic{Inline}{61}{\CrossingLBasepoint}
\mypic{Inline}{61,angle=180,origin=c}{\CrossingLBasepointRotated}
\mypic{Inline}{62}{\CrossingRBasepoint}
\mypic{Inline}{62,angle=180,origin=c}{\CrossingRBasepointRotated}

\mypic{Inline}{63}{\LiT}
\mypic{Inline}{64}{\LoT}
\mypic{Inline}{65}{\NiT}
\mypic{Inline}{66}{\NoT}

\mypic{Inline}{67}{\singleB}
\mypic{Inline}{68}{\singleW}

\mypic{Inline}{69}{\ThreeTwistTangle}
\mypic{Inline}{70}{\ThreeTwistTangleBlue}

\mypic{Inline}{71}{\DotCarc}
\mypic{Inline}{72}{\DotBarc}
\mypic{Inline}{73}{\TrivialTwoTangle}
\mypic{Inline}{74}{\CircleDot}



\DeclareMathOperator{\Cob}{Cob}
\DeclareMathOperator{\Cobl}{\Cob_{/l}}
\DeclareMathOperator{\Cobb}{\Cob_{\bullet/l}}
\DeclareMathOperator{\Cobx}{\Cob_{\bullet/l}^{\bullet\ast=0}}

\DeclareMathOperator{\CoblD}{\Cobl(\Lo\oplus\Li)}
\DeclareMathOperator{\CobbD}{\Cobb(\Lo\oplus\Li)}
\DeclareMathOperator{\CobxD}{\Cobx(\Lo\oplus\Li)}

\DeclareMathOperator{\IdBimodBNAlg}{\prescript{}{\BNAlgH}{\mathbb{I}}^{\BNAlgH}}





\newcommand{\KhT}[1]{[\![ #1 ]\!]} 
\newcommand{\KhTl}[1]{[\![ #1 ]\!]_{/\text{l}}} 

\newcommand{\KhTbx}[1]{\KhT{#1}_{\bullet/l}^{\bullet\ast=0}} 


\DeclareMathOperator{\Kh}{Kh}
\newcommand{\Khr}{\widetilde{\Kh}}






\newcommand{\CF}{\operatorname{CF}}

\DeclareMathOperator{\Homology}{H_\ast}

\newcommand{\HHHKAlg}{\mathcal{A}}
\newcommand{\HHHKSubAlg}{\mathcal{A}^s}
\newcommand{\FigureEightAlg}{\mathcal{C}}
\newcommand{\FigureEightSubAlg}{\mathcal{C}^s}
\DeclareMathOperator{\BNAlg}{\mathscr{B}}
\DeclareMathOperator{\BNAlgH}{\mathcal{B}} 

\DeclareMathOperator{\id}{id} 
\DeclareMathOperator{\Mod}{Mod}


\newcommand{\mcIbim}{
{}_{\BNAlgH}[\mathbf{I}\rightarrow \mathbf{I}]^{\BNAlgH}}
\newcommand{\mcIbimSimple}{[\mathbf{I}\rightarrow \mathbf{I}]} 
\newcommand{\mcImap}{[\mathbf{I}\rightarrow \mathbf{I}]}

\newcommand{\QbimSimple}{\mathbf{Q}} 
\newcommand{\Qmap}{\mathbf{Q}} 
\newcommand{\Qbim}{{}_{\BNAlgH}\mathbf{Q}^{\BNAlg}} 
\newcommand{\YbimSimple}{\mathbf{Y}} 
\newcommand{\Ymap}{\mathbf{Y}} 
\newcommand{\Ybim}{{}_{\BNAlg}\mathbf{Y}^{\BNAlgH}}
\newcommand{\Fmap}{\mathbf{F}}


\newcommand{\F}{\mathbb{F}}

\newcommand{\fieldTwoElements}{\mathbb{F}}




\newcommand{\ignoreme}[1]{}

\newcommand{\bt}{\boxtimes}

\newcommand{\FPS}{(S^2, 4\pt)}
\newcommand{\wrFuk}{\mathcal{W}\FPS}

\newcommand{\genAlg}{\mathcal X}
\newcommand{\genAlgSecond}{\mathcal Y}

\newcommand{\gene}[1]{\text{#1}}
\newcommand{\Lk}{\mathcal{L}} 


\newcommand{\pt}{{\text{pt}}}

\begin{document}
\title[The tangle invariants agree]{Khovanov invariants via Fukaya categories:\\ the tangle invariants agree}

\author{Artem Kotelskiy}
\address{Department of Mathematics \\ Indiana University}
\email{artofkot@iu.edu}

\author{Liam Watson}
\address{Department of Mathematics \\ University of British Columbia}
\email{liam@math.ubc.ca}
\thanks{AK is supported by an AMS-Simons travel grant. LW is supported by an NSERC discovery/accelerator grant.}

\author{Claudius Zibrowius}
\address{Department of Mathematics \\ University of British Columbia}
\email{claudius.zibrowius@posteo.net}

\begin{abstract}
Given a pointed 4-ended tangle $T \subset D^3$, there are two Khovanov theoretic tangle invariants, $\DD_1(T)$ from~\cite{KWZ} and $L_T$ from~\cite{HHHK}, which are twisted complexes over the Fukaya category of the boundary 4-punctured sphere $\FPS=\partial (D^3, T)$. We prove that these two invariants are the same.
\end{abstract}

%

\maketitle

\section{Introduction}

A framework for studying Khovanov and Bar-Natan homologies via wrapped Lagrangian Floer theory of the 4-punctured sphere was recently developed~\cite{KWZ}. Given a 4-ended tangle decomposition of a link $\Lk$ along a Conway sphere $\FPS$, we defined various types of immersed curves inside the decomposing sphere $\FPS$, such that their wrapped Lagrangian Floer homology recovers Khovanov and Bar-Natan homologies of $\Lk$. These constructions drew inspiration from a paper of Hedden, Herald, Hogancamp, and Kirk \cite{HHHK}, and our aim here is to pin down the precise connection between the two works. 

A \emph{pointed} 4-ended tangle is a 4-ended tangle inside a $3$-ball $D^3$, one of whose tangle ends is distinguished from the other three. 
We usually choose the top left tangle end and mark it with an asterisk $\ast$, for example like so: $\CrossingLDot$. 
Let $\FPS$ denote the 4-punctured sphere which is the boundary of the 3-ball $D^3$ minus the four tangle ends, and write $\wrFuk$ for the wrapped Fukaya category of this 4-punctured sphere. 

Hedden, Herald, Hogancamp, and Kirk defined an invariant $L_T$ of pointed 4-ended tangles $T$, which takes the form of a twisted complex over a certain full subcategory $\HHHKAlg$ of $\wrFuk$~\cite{HHHK}; see Definition~\ref{def:HHHKAlg}. We denote the $A_\infty$-category of twisted complexes over $\HHHKAlg$ by $\Mod^{\HHHKAlg}$.  In previous work, we defined an invariant $\DD_1(T)$ of pointed 4-ended tangles $T$, which takes the form of a twisted complex over 
another full subcategory $\BNAlgH$ of $\wrFuk$~\cite{KWZ}; see Definition~\ref{def:BNAlgH}. We denote the $A_\infty$-category of twisted complexes over $\BNAlgH$ by $\Mod^{\BNAlgH}$. 
Both invariants are well-defined up to homotopy. 
 
\begin{main}
  There exists a quasi-isomorphism between \(\Mod^{\HHHKAlg}\) and a full subcategory of \(\Mod^{\BNAlgH}\) such that the image of \(L_T\) under the embedding 
  \[
  \Mod^{\HHHKAlg}\hookrightarrow\Mod^{\BNAlgH}
  \]
  is chain homotopic to \(\DD_1(T)\) for any pointed 4-ended tangle \(T\). In particular, as twisted complexes up to homotopy, \(L_T\) and \(\DD_1(T)\) determine each other. 
\end{main}

To be precise, the authors of~\cite{HHHK} actually work in the Fukaya category of the pillowcase instead of $\FPS$. The pillowcase is the traceless $SU(2)$-character variety of the 4-punctured sphere. As an orbifold, it is a $2$-sphere with four orbifold points. These orbifold points are treated as punctures in~\cite{HHHK}, and so there is no difference between working with the pillowcase or a 4-punctured sphere. Moreover, the pillowcase can be canonically identified with $\FPS$ after distinguishing one of the tangle ends~\cite[Section~8.2]{KWZ}. So in the discussion above, we are implicitly using the induced identification of the Fukaya category of the pillowcase with $\wrFuk$.

\subsection{Immersed curves}\label{sec:intro:curves} The invariant $\DD_1(T)$ can be represented geometrically by a collection of immersed curves with local systems~\cite{KWZ}. The authors of~\cite{HHHK} mention that it should be possible to reinterpret their invariant $L_T$ in terms of immersed curves with local systems by appealing to a general result by Haiden, Katzarkov, and Kontsevich~\cite[Theorem~4.3]{HKK}. Doing this in practice, however, is rather involved, because one needs to rewrite the twisted complex $L_T$ in terms of generators of $\wrFuk$ specified by some full arc system of the 4-punctured sphere. 

As a consequence of the Main Theorem, combined with results from~\cite{KWZ}, we now obtain a concrete way of producing immersed curves with local systems from $L_T$; compare~\cite[Corollary~1.3]{HHHK} and remarks following. Namely, we first use the equivalence of $L_T$ with $\DD_1(T)$, and then interpret $\DD_1(T)$ as the immersed curve invariant $\Khr(T)$, one of the main objects introduced in~\cite{KWZ}. An algorithm for computing $\Khr(T)$ has been implemented in~\cite{KhT.py}. 

\subsection{Notation, conventions, references.}

This paper is written as a follow-up to~\cite{KWZ}, and so we follow the same notation and conventions. All algebras in this paper are considered to be $A_\infty$-algebras, even in the case when the differential and the higher products vanish (as is the case, for example, for the algebra $\BNAlgH$). Consequently, all the maps between algebras are required to satisfy the corresponding relations. Moreover, all categories will be enriched over vector spaces. Therefore, we will use algebras and categories synonymously, treating every basic idempotent of an algebra as an object in its corresponding category. For derived structures over algebras we use the bordered algebraic framework of Lipshitz, Ozsváth, and Thurston~\cite{LOT-main,LOT-bim}. The main algebraic object for us is a \emph{right type~D structure} over an $A_\infty$-algebra ${\genAlg}$; see~\cite[Definition~2.2.23]{LOT-bim}. 
The notion of (bounded) type~D structures is equivalent to the notion of twisted complexes~\cite{Kontsevich_HMS}. This equivalence is described explicitly in \cite[Proposition~2.13]{KWZ} for type~D structures and twisted complexes over dg algebras, but the general case is similar. Following~\cite{LOT-bim}, we use the notation $\Mod^{\genAlg}$ for the $A_\infty$-category of type~D structures over ${\genAlg}$ that are chain homotopic to a bounded type~D structure, and use the superscript $N^{\genAlg}$ to indicate that $N\in \Mod^{\genAlg}$. In this paper all the type~D structures are bounded, since they arise from the cube of resolution construction where all the maps increase the cube filtration.

Type~D structures are preferred over twisted complexes because we often find it more convenient to analyze $A_\infty$-functors, eg $\mathbf{G}\co {\genAlg} \rightarrow {\genAlgSecond} $ or $\mathbf{G}\co {\genAlg} \rightarrow \Mod^{{\genAlgSecond}} $, in terms of the corresponding \emph{type~AD structures (bimodules)} $_{{\genAlg}}\mathbf{G}^{{\genAlgSecond} }$. See ~\cite[Definition~2.2.48]{LOT-bim} for the construction  in the case $\mathbf{G}\co {\genAlg} \rightarrow {\genAlgSecond}$; the case $\mathbf{G}\co {\genAlg} \rightarrow \Mod^{{\genAlgSecond}} $ is completely analogous. The induced map on the $A_\infty$-categories of type~D structures is given by the \emph{box tensor product}: $- \boxtimes {}_{{\genAlg}}\mathbf{G}^{{\genAlgSecond} }\co \Mod^{{\genAlg}} \rightarrow \Mod^{{\genAlgSecond}}$. Note that superscripts indicate the algebra from the type~D side of type~AD bimodules, whereas the subscript indicates the algebra from the type~A side. For the generators of bimodules, we use the subscripts to indicate their left and right idempotents. 

We work exclusively over the field~$\fieldTwoElements$ of two elements. 
  
\subsection{Outline}

Section~\ref{sec:algebras} introduces the algebras that are needed to define the tangle invariants $\DD_1(T)$ and $L_T$ in Section~\ref{sec:tangle_invariants}. Section~\ref{sec:embedding} constructs an embedding of $\HHHKAlg$ into $\Mod^{\BNAlgH}$ that is used in Section~\ref{sec:proof_of_main_theorem} to prove the Main Theorem. 

\section{Various algebras}\label{sec:algebras}
There are a number of algebras that  are relevant to the construction of the tangle invariants $L_T$ and \(\DD_1(T)\). The relationship between these algebras is summarized in Figure~\ref{fig:diagram:Algebras:woFigureEightAlg}.

\begin{figure}[ht]
  $$
  \begin{tikzcd}[column sep=1.5cm,row sep=1.2cm]
  \CoblD
  \arrow[equal]{r}{\sim}
  \arrow{d}[swap]{H=0}
  \arrow[d, phantom, ""{name=A}]
  &
  \BNAlgH
  \arrow{d}[swap]{H=0}
  \arrow[d, phantom, ""{name=B}]
  \arrow[phantom, from=A, to=B, "\square"]
  \\
  \CobxD
  \arrow[equal]{r}{\sim}
  &
  \BNAlg
  \arrow[equal]{r}{\sim}
  &
  \HHHKSubAlg
  \arrow[hook]{r}{}
  &
  \HHHKAlg
  \end{tikzcd}
  $$
  \caption{The relationship between the six algebras defined in Section~\ref{sec:algebras}. The square $\square$ commutes. }\label{fig:diagram:Algebras:woFigureEightAlg}
\end{figure}

\subsection{\texorpdfstring{The algebras \(\HHHKAlg\) and \(\HHHKSubAlg\)}{The algebras A and A\^\ s}}\label{sec:algebras:HHHKAlg}

\begin{definition}\label{def:HHHKAlg}
  Let $\HHHKAlg$ be the full subcategory of the Fukaya category $\wrFuk$ of the 4-punctured sphere generated by the two figure-8 curves $L_0$ and $L_1$ in Figure~\ref{fig:L0L1}. 
\end{definition}
It is a feature of the construction of the Fukaya category that $\HHHKAlg$ is well-defined only up to quasi-isomorphism. We now describe a particular model of $\HHHKAlg$ computed in~\cite{HHHK}. As vector spaces over $\fieldTwoElements$, the morphism spaces $\CF(L_i,L_j)$ in $\HHHKAlg$ are generated by the intersection points from Figure~\ref{fig:embedding}: 
\begin{align*}
\CF(L_0,L_0)
&=
\fieldTwoElements\langle a_0,b_0,c_0,d_0\rangle
&
\CF(L_1,L_1)
&=
\fieldTwoElements\langle a_1,b_1,c_1,d_1\rangle
\\
\CF(L_1,L_0)
&=
\fieldTwoElements\langle p_{01},q_{01}\rangle
&
\CF(L_0,L_1)
&=
\fieldTwoElements\langle p_{10},q_{10}\rangle
\end{align*}
\begin{figure}[H]
  \centering
  \begin{subfigure}[t]{0.3\textwidth}
    \centering
    \includegraphics[scale=0.5]{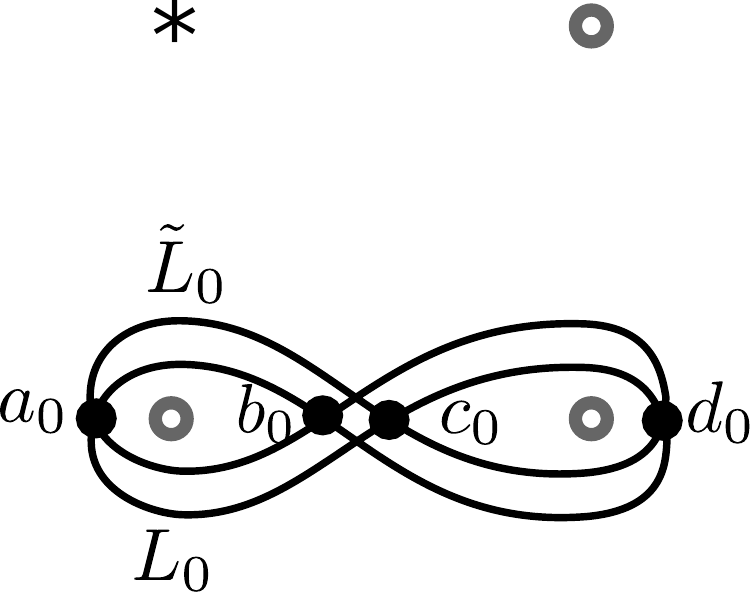}
    \caption{}  
  \end{subfigure}
  \begin{subfigure}[t]{0.3\textwidth}
    \centering
    \includegraphics[scale=0.5]{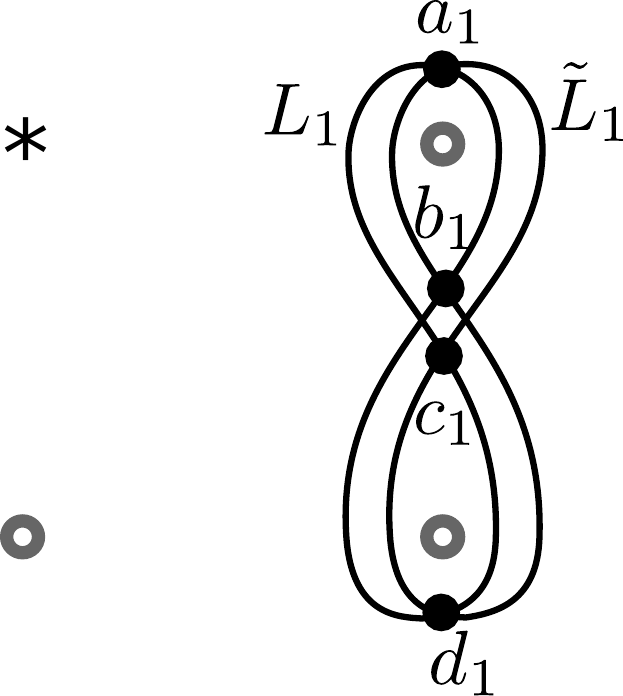}
    \caption{}
  \end{subfigure}
  \begin{subfigure}[t]{0.3\textwidth}
    \centering
    \includegraphics[scale=0.5]{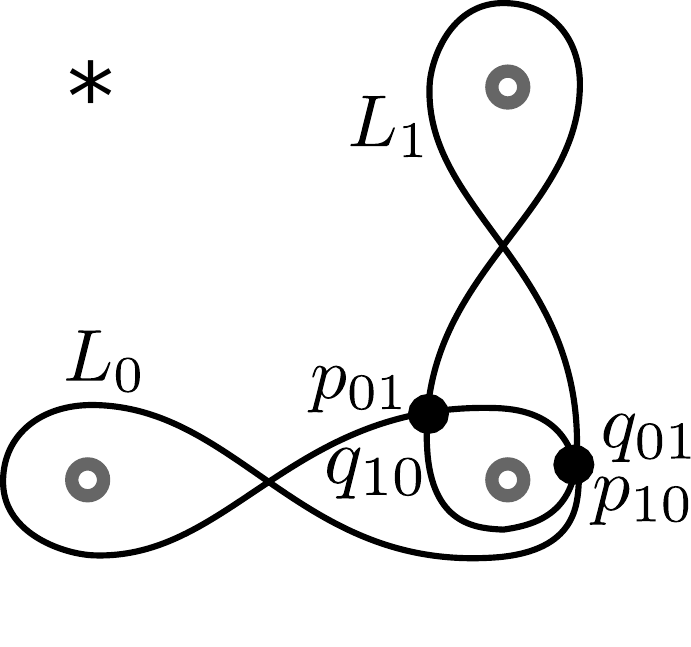}
    \caption{}
    \label{fig:L0L1}
  \end{subfigure}
  \caption{The generators of the algebra $\HHHKAlg$}\label{fig:embedding}
\end{figure}
We are using the same naming conventions for the morphisms as in~\cite{HHHK}, except that we have switched the roles of $p_{10}$ and $q_{10}$. The $A_\infty$-operations in $\HHHKAlg$ can also be described explicitly:

\begin{theorem}[{\cite[Theorem~4.1]{HHHK}}]\label{thm:HHHK:Algebra}
  The  $A_\infty$-category $\HHHKAlg$ is associative and strictly unital, with identity elements $a_0\in \CF(L_0,L_0)$ and $a_1\in \CF(L_1,L_1)$, ie $\mu^2(a_i, x)=x$ and $\mu^2(y,a_i)=y$ whenever these products are defined.  The only other non-zero values of $\mu^2$ are
  \begin{align*}
  \mu^2(b_0,c_0)=\mu^2(c_0,b_0)=\mu^2(p_{01},q_{10})=\mu^2(q_{01},p_{10})=d_0,~\mu^2(p_{01},p_{10})=c_0,\\
  ~\mu^2(b_1,c_1)=\mu^2(c_1,b_1)=\mu^2(p_{10},q_{01})=\mu^2(q_{10},p_{01})=d_1,~\mu^2(p_{10},p_{01})=c_1\\
  \mu^2(b_0,p_{01})=\mu^2(p_{01}, b_1 )=q_{01}, ~ ~\mu^2(p_{10}, b_0 )=\mu^2(b_1,p_{10})=q_{10}.
  \end{align*}  
  The non-zero values of $\mu^3$ are
  \begin{align*}
  \mu^3(p_{01}, p_{10}, b_0)=a_0,~\mu^3(p_{10},b_0,p_{01})=a_1, \\
  \mu^3 (q_{01}, p_{10}, b_0)=\mu^3(p_{01},q_{10},b_0)=b_0, 
  ~\mu^3(q_{10},p_{01}, b_1)= \mu^3(b_1,p_{10},q_{01})=b_1, \\
  \mu^3(c_0,b_0,c_0)= \mu^3(c_0,q_{01},p_{10})= \mu^3(c_0,p_{01},q_{10})=c_0,\\
  \mu^3(c_1,b_1,c_1)= \mu^3(c_1,q_{10},p_{01})=\mu^3(p_{10},q_{01}, c_1)=c_1,\\
  \mu^3(d_0,c_0,b_0)=\mu^3(b_0,c_0,d_0)=\mu^3(q_{01},p_{10},d_0)= \mu^3(p_{01},q_{10},d_0)=d_0,\\
  \mu^3(b_1,c_1,d_1)=\mu^3(d_1,c_1,b_1)=\mu^3(d_1,p_{10},q_{01})=\mu^3(q_{10}, p_{01}, d_1)=d_1,\\
  ~ \mu^3(p_{01},q_{10},q_{01})=q_{01},~ \mu^3(q_{10},p_{01},q_{10})=q_{10},~
  \mu^3(p_{10},q_{01},p_{10})=\mu^3(p_{10},p_{01},q_{10})=p_{10}.
  \end{align*}
  Furthermore, $\mu^k$ is zero if $k\ne 2,3$.
\end{theorem}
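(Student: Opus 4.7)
The strategy is to compute the wrapped Fukaya algebra of $L_0 \cup L_1$ directly from immersed polygon counts, using a combinatorial surface model such as the one of Haiden-Katzarkov-Kontsevich. In this model, for curves in general position in a punctured surface, $\CF(L, L')$ is freely generated over $\fieldTwoElements$ by the transverse intersection points, and $\mu^k(x_k, \ldots, x_1) = y$ counts immersed $(k+1)$-gons in $\FPS$ whose corners read off $x_1, \ldots, x_k, y$ in boundary-ccw order, with boundary arcs on the appropriate Lagrangians.

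The twelve generators are straightforward to enumerate. Each figure-8 curve $L_i$ has a single self-intersection, which contributes two angle classes $b_i, c_i \in \CF(L_i, L_i)$; together with the identity $a_i$ and a length-graded "top" class $d_i$, these span a four-dimensional endomorphism space. The curves $L_0$ and $L_1$ meet in four transverse points, which split as $\{p_{01}, q_{01}\}$ and $\{p_{10}, q_{10}\}$ according to the direction of the corner.

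To read off $\mu^2$ and $\mu^3$, I would enumerate immersed triangles and quadrilaterals with boundary on $\{L_0, L_1\}$. A useful organising principle is that each bounded region of $\FPS \smallsetminus (L_0 \cup L_1)$ containing no puncture gives rise to a triangle or quadrilateral with trivial boundary winding, while regions containing exactly one puncture produce "wrapping" products; a systematic inspection of these regions should yield exactly the operations listed in the theorem. Strict unitality of the $a_i$ removes redundant enumerations at the outset, and the remaining products organise naturally into the four blocks (two endomorphism algebras and two mixed hom-spaces).

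The main obstacle is verifying that $\mu^k$ vanishes for all $k \geq 4$. The direct geometric route is to argue that any immersed $(k+1)$-gon with corners among the listed generators, excluding repeated $a_i$, must either encircle a puncture an inadmissible number of times or appear as one of a cancelling pair under a boundary degeneration. A cleaner algebraic alternative is to show that the stated $\mu^2$ and $\mu^3$ already satisfy the full $A_\infty$-relations; combined with a formality/minimal-model argument for the quasi-isomorphism class of $\HHHKAlg$, this pins down the algebra without a separate vanishing statement. Either way, the bulk of the work reduces to a finite combinatorial check, but the direct polygon enumeration is where the delicate counting lies.
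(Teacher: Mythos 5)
This statement is not proved in the paper at all: it is imported verbatim (up to swapping the names of $p_{10}$ and $q_{10}$) from~\cite[Theorem~4.1]{HHHK}, where it is established by an explicit holomorphic/immersed polygon count for specific representatives of $L_0,L_1$ and a specific Hamiltonian perturbation. Your outline points in that direction, but as written it has two genuine gaps. First, the exact table in the theorem is a statement about a particular chain-level model, not about the quasi-isomorphism class: as Remark~\ref{rem:asymmetry:HHHK} stresses, the asymmetry of the operations (e.g.\ $\mu^2(b_0,p_{01})=q_{01}$ versus $\mu^3(p_{10},q_{01},p_{10})=p_{10}$, and the asymmetric $\mu^2(p_{01},q_{10})$ versus $\mu^2(p_{10},q_{01})$ entries) is an artifact of the chosen curve representatives and perturbing Hamiltonian. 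A generic ``enumerate regions of $\FPS\smallsetminus(L_0\cup L_1)$'' scheme, with the perturbation left unspecified, cannot reproduce this particular table; you must fix the same perturbation data as \cite{HHHK} (in particular, $\CF(L_i,L_i)$ is generated by intersection points of $L_i$ with its perturbed copy, so $a_i$ and $d_i$ are honest geometric generators, not a formal unit and a ``length-graded top class''), and then the polygon count is a careful case analysis, not a one-line organising principle.

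Second, your proposed shortcut for the vanishing of $\mu^k$, $k\geq 4$, does not work. Checking that the stated $\mu^2$ and $\mu^3$ satisfy the $A_\infty$-relations only shows that the table defines \emph{some} $A_\infty$-algebra; it does not show that this algebra is $\HHHKAlg$, and a formality or minimal-model argument would require already knowing the quasi-isomorphism type of $\HHHKAlg$ by an independent computation — which is exactly the content being proved — and even then would pin the operations down only up to $A_\infty$-isomorphism rather than on the nose, which is insufficient for the way this model is used later (the functor $\Fmap$ of Definition~\ref{fig:Functor} and the verification in \texttt{functor.cpp} are tied to precisely these structure constants). Likewise ``cancelling pairs under boundary degeneration'' is an assertion, not an argument; in \cite{HHHK} the vanishing of the higher operations is part of the explicit polygon analysis for the chosen perturbation. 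So either carry out that full geometric computation with the same choices as \cite{HHHK}, or do what this paper does and cite their Theorem~4.1.
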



\begin{remark}\label{rem:asymmetry:HHHK}
  The $A_\infty$-operations of $\HHHKAlg$ are asymmetrical. This is an artifact of the particular choice of representatives of the curves $L_0$ and $L_1$ and the Hamiltonian function used for perturbations; see the bottom of~\cite[Page~10]{HHHK}. There exists a different Hamiltonian function which results in a symmetric model of the quasi-isomorphism class of the algebra $\HHHKAlg$.
\end{remark}

\begin{definition}\label{def:HHHKSubAlg}
  Let $\HHHKSubAlg$ be the subalgebra of $\HHHKAlg$ generated by the morphisms \(a_0\), \(c_0\), \(a_1\), \(c_1\), \(p_{01}\) and \(p_{10}\).
\end{definition}
Observe that the subalgebra $\HHHKSubAlg$ has no higher $A_\infty$-multiplications;  it is an honest algebra over $\fieldTwoElements$. In~\cite{HHHK}, this algebra is denoted by $A_{\mathcal D}$.

\begin{wrapfigure}{r}{0.22\textwidth}
  \centering
  $\FukayaFigureEightAlg$
  \caption{}\label{fig:DotBarcDotCarc}
\end{wrapfigure}

\myfixwrapfig

\subsection{\texorpdfstring{The algebras \(\BNAlgH\) and \(\BNAlg\)}{The algebras B and curly B}}\label{sec:algebras:BNAlgH}

\begin{definition}\label{def:BNAlgH}
  Let $\BNAlgH$ be the full subcategory of the wrapped Fukaya category \(\wrFuk\) of the 4-punctured sphere
  generated by the two arcs labelled \(\DotB\) and \(\DotC\) in Figure~\ref{fig:DotBarcDotCarc}. 
\end{definition}

It is well-known (see \cite[Theorem~7.6]{Bocklandt}, for example) that the wrapped Fukaya category generated by arcs on a surface admits an explicit description in terms of a quiver algebra. In our case of the category $\BNAlgH$, we have
\[
\BNAlgH =
\F\Big[
\begin{tikzcd}[row sep=2cm, column sep=1.5cm]
\DotB
\arrow[leftarrow,in=145, out=-145,looseness=5]{rl}[description]{D_1}
\arrow[leftarrow,bend left]{r}[description]{S_2}
&
\DotC
\arrow[leftarrow,bend left]{l}[description]{S_1}
\arrow[leftarrow,in=35, out=-35,looseness=5]{rl}[description]{D_2}
\end{tikzcd}
\Big]\Big/ (D_j S_i=0=S_i D_j)
\hspace{0.22\textwidth}
\]
Often, we will abuse notation by using $D$ in place of both $D_1$ and $D_2$, and $S$ in place of both $S_1$ and $S_2$. More precisely, set $S\coloneqq S_1+S_2$ and $D\coloneqq D_1+D_2$.
The variable $H$ denotes the following central element of $\BNAlgH$:
  $$H\coloneqq D+S^2 = D_1 + D_2 + S_2S_1 + S_1S_2$$

\begin{definition}\label{def:BNAlg}
  Let \(\BNAlg=\BNAlgH|_{H=0}\) be the quotient algebra obtained from \(\BNAlgH\) by setting \(H=0\). 
\end{definition}

The algebra $\BNAlg$ admits the following description as a quiver algebra:
\[
\BNAlg
=
\F\Big[
\begin{tikzcd}[row sep=2cm, column sep=1.5cm,ampersand replacement = \&]
\DotB
\arrow[leftarrow,in=145, out=-145,looseness=5]{rl}[description]{D}
\arrow[leftarrow,bend left]{r}[description]{S}
\&
\DotC
\arrow[leftarrow,bend left]{l}[description]{S}
\arrow[leftarrow,in=35, out=-35,looseness=5]{rl}[description]{D}
\end{tikzcd}
\Big]
\Big/ 
\Big(\substack{DS=0=SD\\S^2=D}\Big)
=
\F\Big[
\begin{tikzcd}[row sep=2cm, column sep=1.5cm,ampersand replacement = \&]
\DotB
\arrow[leftarrow,bend left]{r}[description]{S}
\&
\DotC
\arrow[leftarrow,bend left]{l}[description]{S}
\end{tikzcd}
\Big]
\Big/ (S^3=0) 
\]

\subsection{\texorpdfstring{The cobordism categories $\Cobl$ and $\Cobb$}{The cobordism categories Cob\_/\text{l} and Cob\_•/\text{l}}}\label{sec:algebras:Cob}

\begin{definition}\label{def:Cobl}
  Let \( \Cobl\) be the following category:
  the objects are crossingless tangle diagrams \( T \) in a fixed disc \(D^2\) with four ends on \(\partial D^2\). The morphisms are $\fieldTwoElements$-linear combinations of orientable cobordisms between those tangle diagrams modulo the following relations:
  \begin{description}
    \item[\( S \)-relation] Whenever a cobordism contains a component which is a sphere, the cobordism is set equal to 0; in short: \[ \Sphere=0\]
    \item[\( 4Tu \)-relation] Given a cobordism \( C \), let us consider four embedded open discs \( D_1 \) through \( D_4 \) on \( C \). 
    For \( i\neq j \), let \( C_{ij} \) denote the cobordism obtained from \( C \) by removing the discs \( D_i \) and \( D_j \) and replacing them by a tube with the same boundary. Then \[ C_{12}+C_{34}=C_{13}+C_{24}\] 
    or in pictures: 
    \[ \TuT+\TuB=\TuR +\TuL\]
  \end{description} 
\end{definition}

\begin{definition}\label{def:CoblDot}
  Let \(\Cobb\) be the following category:
  its objects are the same as in \(\Cobl\). 
  Morphisms are $\fieldTwoElements$-linear combinations of orientable \emph{dotted} cobordisms. The components of such cobordisms carry extra decorations in the form of marked points, which we label by dots~$\bullet$; they can move freely along each component. The morphisms are considered modulo the following set of relations:
  \begin{description}
    \item[\( S \)-relation] Whenever a cobordism contains a component which is a sphere, the cobordism is set equal to 0; in short: \[ \Sphere=0\]
    \item[\(S_\bullet\)-relation] Whenever a cobordism contains a component which is a sphere with a single dot~\(\bullet\), it is equal to the same cobordism but with this component removed: \[ \Spheredot=1\]
    \item[$\bullet\bullet$-relation] Whenever a cobordism contains a component containing two dots occupying the same component of a cobordism, the cobordism is set equal to 0: \[ \planedotdot=0\]
    \item[Neck-cutting relation] Given a cobordism \(C\) containing a compressing disc \(D\), consider the cobordism \(C'\) obtained from \(C\) by doing surgery along \(D\), which contains two embedded discs \(D_1\) and \(D_2\) as mementos from the surgery. Then the morphism represented by \(C\) is equal to the formal sum of two morphisms which are obtained from \(C'\) by placing a dot in \(D_1\) and \(D_2\), respectively. In pictures: \[\tube=\DiscLdot\DiscR+\DiscL\DiscRdot\] 
  \end{description}
\end{definition}

Both categories $\Cobl$ and $\Cobb$ were introduced by Bar-Natan~\cite{BarNatanKhT}, and in~\cite[Remark~4.12]{KWZ} we explain the connection between them. The main idea is to choose a particular basis for the morphism spaces in $\Cobl$. This is done as follows: first, we fix a special component for every cobordism in \( \Cobl \). To make these choices consistently, we distinguish one of the four tangle ends (the same for all tangles, usually the one on the top left), and say that the special component of any cobordism between two crossingless tangles is the one containing this distinguished tangle end. Note that this is compatible with composition of cobordisms. In local pictures of cobordisms, we will decorate the special component by an asterisk \( \ast \). Similarly, we mark the distinguished tangle end by an asterisk $\ast$, like so: $\Lo$. Then we introduce two pieces of specific notation. First, we introduce a formal variable $H$; multiplication of a cobordism by $H$ increases the genus of the special component by 1. Second, we introduce decorations of components of cobordisms by dots $\bullet$, which  represent certain linear combinations of cobordisms in $\Cobl$. For details, see~\cite[Definition~4.10]{KWZ}. With this notation in place,~\cite[Proposition~4.11]{KWZ} and~\cite[Proposition~4.15]{KWZ} imply that the two relations on morphisms in $\Cobl$ are equivalent to 
\begin{equation}
\begin{split}
&\Sphere=0 \qquad \Spheredot=1 \qquad \planedotdot=H\cdot\planedot \qquad \planedotstar =0 \\ 
&\tube=\DiscLdot\DiscR+\DiscL\DiscRdot+H\cdot\DiscL\DiscR 
\end{split}
\end{equation}
By comparing these relations to the relations in Definition~\ref{def:CoblDot}, we obtain the following relationship between $\Cobl$ and  $\Cobb$:
\begin{equation}\label{fig:relationship_Cobs}
\Cobl \Big/\Big( H=0 \Big)
=
\Cobb \Big/\left(\planedotstar =0\right)
\end{equation}
\begin{definition}
Define \(\Cobx\) to be the category from Equation~\eqref{fig:relationship_Cobs}.
\end{definition}

\begin{remark}\label{rem:delooping}
An important feature of the categories $\Cobl$,  $\Cobb$, and \(\Cobx\) is that every object is isomorphic to a direct sum of the basic crossingless tangles. This results from a process referred to as \emph{delooping}; see~\cite[Lemma~3.1]{BarNatanBurgosSoto} for $\Cobb$ and~\cite[Observation~4.18]{KWZ} for $\Cobl$. 
Specializing to the case of 4-ended tangles, there are exactly two basic crossingless tangles: $\Lo$ and $\Li$. Delooping implies that $\Cobl$ is equivalent to the additive enlargement of its full subcategory generated by $\Lo$ and $\Li$, and the same holds for $\Cobb$ and $\Cobx$. We denote those subcategories by
$$
\CoblD, \quad \CobbD, \quad\text{ and }\quad \CobxD,
$$
respectively. 
\end{remark}

\subsection{Various coincidences}\label{sec:algebras:relations}

In~\cite[Theorem~4.21]{KWZ}, we showed the following:

\begin{theorem}\label{thm:identification:BNAlgH}
 There is an isomorphism $\BNAlgH\cong\CoblD$ via the following dictionary:
  \begin{align*}
    \DotB
    &
    \leftrightarrow \Lo,
    &
    (\DotB \xrightarrow{\iota_{\smallDotB}} \DotB)
    &
    \leftrightarrow \id_{\smallLo},
    &
    (\DotB \xrightarrow{D} \DotB) 
    &
    \leftrightarrow \textnormal{(dot cobordism)},
    &
    (\DotC \xrightarrow{S} \DotB) 
    &
    \leftrightarrow \textnormal{(saddle cobordism)}
    \\
    \DotC
    &
    \leftrightarrow \Li,
    &
    (\DotC \xrightarrow{\iota_{\smallDotC}} \DotC)
    & 
    \leftrightarrow \id_{\smallLi}, 
    &
    (\DotC \xrightarrow{D} \DotC) 
    &
    \leftrightarrow \textnormal{(dot cobordism)}, 
    &
    (\DotB \xrightarrow{S} \DotC) 
    &
    \leftrightarrow \textnormal{(saddle cobordism)}
  \end{align*}
\end{theorem}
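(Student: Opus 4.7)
The strategy is to construct a map $\Phi\co\BNAlgH\to\CoblD$ directly from the dictionary in the statement, verify that $\Phi$ is well-defined, and then show it identifies $\F$-bases on both sides.

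First, I would define $\Phi$ on the quiver generators of $\BNAlgH$: the idempotents $\iota_{\smallDotB},\iota_{\smallDotC}$ map to $\id_{\smallLo},\id_{\smallLi}$, each $S_i$ maps to the (unique, up to isotopy) saddle cobordism between $\Lo$ and $\Li$, and each $D_j$ maps to a dot cobordism with the dot placed on the non-special component of the identity cobordism. Extending multiplicatively, there are two families of relations to verify, namely $D_jS_i=0$ and $S_iD_j=0$. Geometrically these express the vanishing of dotted cobordisms in which the dot lies on a component that is absorbed into the special component by a subsequent (or preceding) saddle; this follows from the relation $\planedotstar=0$ together with neck-cutting applied near the saddle. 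I would then verify that the central element $H=D+S^2\in\BNAlgH$ is sent to the formal variable $H\in\CoblD$ that increases the genus of the special component, by computing the composite of two saddles via neck-cutting on the annulus component that appears.

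Next, to show $\Phi$ is an isomorphism, I would compute explicit $\F$-bases for the four morphism spaces in $\CoblD$. Using the delooping procedure of Remark~\ref{rem:delooping} together with neck-cutting and the $4Tu$-relation, any cobordism between the basic tangles $\Lo$ and $\Li$ can be reduced to a genus-zero cobordism carrying at most one dot on each component, with handles on the special component absorbed into powers of $H$. This yields an $\F[H]$-rank $2$ basis for each endomorphism space and an $\F[H]$-rank $1$ basis for each off-diagonal morphism space, exactly matching the vector-space structure of $\BNAlgH$. Checking that $\Phi$ sends the natural generators of $\BNAlgH$ bijectively onto these basis elements then completes the proof.

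The main obstacle is the careful bookkeeping of Bar-Natan's dot notation: in $\Cobl$, dots are a formal shorthand for specific linear combinations of genuine cobordisms, and one must verify that algebraic identities such as $D_jS_i=0$ and $H=D+S^2$ translate into correct cobordism identities modulo the $S$- and $4Tu$-relations. The $4Tu$-relation is indispensable here: without it, the morphism spaces in $\Cobl$ would fail to be finitely generated over $\F[H]$, so it is precisely the $4Tu$-relation that cuts $\CoblD$ down to the rank of $\BNAlgH$.
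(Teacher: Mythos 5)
Your plan is essentially the same argument as the one this paper relies on: the statement is not reproved here but quoted from \cite{KWZ} (Theorem~4.21 there), whose proof likewise fixes the special component via $\ast$, introduces the dot/$H$ notation, rewrites the $S$- and $4Tu$-relations as the dotted relations including $\planedotstar=0$ and neck-cutting with an $H$-correction (exactly the material recalled in Section~\ref{sec:algebras:Cob}), and then matches the resulting free $\F[H]$-module bases of the four morphism spaces of $\CoblD$ with the quiver description of $\BNAlgH$. The only points you leave implicit are minor: delooping concerns objects and is not what reduces morphisms (the reduction is neck-cutting plus the dot relations), and one must still argue that the reduced dotted genus-zero cobordisms are linearly \emph{independent}, not merely spanning — precisely the content of the cited propositions of \cite{KWZ}.
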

By dot cobordism, we mean the identity cobordism with a dot on the component without the distinguished tangle end \(\ast\), and by saddle cobordism we mean the cobordism whose underlying surface is contractible and does not carry any dot. Note that since we are using the dot notation in this theorem, the isomorphism depends on the choice of the distinguished tangle end $\ast$. 

\begin{observation}\label{obs:identification:BNAlg}
  The isomorphism from Theorem~\ref{thm:identification:BNAlgH} restricts to $\BNAlg\cong\CobxD$.
\end{observation}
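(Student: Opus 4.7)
The plan is to show that the isomorphism of Theorem~\ref{thm:identification:BNAlgH} sends the algebraic element $H=D+S^2\in\BNAlgH$ to the formal element $H\in\CoblD$. Once this is verified, the observation follows immediately by passing to quotients: the left-hand side becomes $\BNAlg=\BNAlgH/(H)$ by Definition~\ref{def:BNAlg}, while the right-hand side becomes $\CobxD=\CoblD/(H)$ by the definition of $\Cobx$ via Equation~\eqref{fig:relationship_Cobs}, restricted to the full subcategory on $\Lo$ and $\Li$.

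To identify the two elements called $H$, I would apply the dictionary of Theorem~\ref{thm:identification:BNAlgH} to the summands of $H=D+S^2$ separately. The term $D$ at each idempotent corresponds to an identity cobordism on $\Lo$ or $\Li$ carrying a single dot on the non-distinguished component. The term $S^2=S_1S_2+S_2S_1$ at each idempotent is a pair of composed saddle cobordisms, which realizes a tube cobordism on the identity of $\Lo$ or $\Li$. Neck-cutting this tube in the $(H,\bullet)$-basis of Section~\ref{sec:algebras:Cob} produces
$$\tube=\DiscLdot\DiscR+\DiscL\DiscRdot+H\cdot\DiscL\DiscR.$$
The middle summand vanishes because the dot sits on the component containing the distinguished tangle end $\ast$, while the first summand is precisely the image of $D$. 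Hence $S^2=D+H\cdot\id$ at each idempotent, so the image of $D+S^2$ equals $H\cdot\id$, as required.

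The only real obstacle is tracking the conversion between dotted cobordisms and the formal variable $H$ from~\cite[Definition~4.10]{KWZ}, but once this bookkeeping is unpacked the verification reduces to a single neck-cutting computation, entirely parallel to (and in fact extracted from) the proof of Theorem~\ref{thm:identification:BNAlgH}.
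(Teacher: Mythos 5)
Your proposal is correct and matches the reasoning the paper leaves implicit: the observation is stated without proof precisely because, under the dictionary of Theorem~\ref{thm:identification:BNAlgH}, the central element $D+S^2$ corresponds to $H\cdot\id$ (the neck-cutting relation together with $\planedotstar=0$ from the $(H,\bullet)$-description of $\Cobl$ in Section~\ref{sec:algebras:Cob}), so the isomorphism descends to the quotients $\BNAlg=\BNAlgH|_{H=0}$ and $\CobxD=\CoblD/(H=0)$ via Equation~\eqref{fig:relationship_Cobs}. Your write-up simply makes this bookkeeping explicit, which is exactly the intended argument.
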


One of the key results in~\cite{HHHK}, namely Theorem~7.1, states that Bar-Natan's dotted cobordism category of planar crossingless tangles is isomorphic to the additive enlargement of the subalgebra $\HHHKSubAlg$. In view of Observation~\ref{obs:identification:BNAlg}, the following result is simply a reformulation of theirs: 
\begin{theorem}
There is an isomorphism  $\BNAlg\cong\HHHKSubAlg$ via the following dictionary:
  \begin{align*}
    \DotB
    &
    \leftrightarrow L_0,
    &
    (\DotB \xrightarrow{\iota_{\smallDotB}} \DotB)
    &
    \leftrightarrow a_0,
    &
    (\DotB \xrightarrow{S^2} \DotB) 
    &
    \leftrightarrow c_0,
    &
    (\DotC \xrightarrow{S} \DotB) 
    &
    \leftrightarrow p_{01}
    \\
    \DotC
    &
    \leftrightarrow L_1,
    &
    (\DotC \xrightarrow{\iota_{\smallDotC}} \DotC)
    & 
    \leftrightarrow a_1, 
    &
    (\DotC \xrightarrow{S^2} \DotC) 
    &
    \leftrightarrow c_1, 
    &
    (\DotB \xrightarrow{S} \DotC) 
    &
    \leftrightarrow p_{10}
  \end{align*}
\end{theorem}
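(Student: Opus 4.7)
The plan is to derive the stated isomorphism and dictionary by composing two identifications that are already available, without doing any new categorical computation. First I would invoke Observation~\ref{obs:identification:BNAlg} to get $\BNAlg\cong\CobxD$, under which $\DotB\leftrightarrow\Lo$, $\DotC\leftrightarrow\Li$, the idempotents $\iota_{\smallDotB}$ and $\iota_{\smallDotC}$ correspond to identity cobordisms, each arrow labelled $S$ corresponds to a saddle cobordism, and each arrow labelled $D$ corresponds to a dot cobordism on the component not containing the distinguished tangle end $\ast$. The key observation here is that in $\BNAlg$ the relation $H=D+S^2=0$ forces $D=S^2$, so the element written as $S^2$ in the target dictionary is literally (the image of) the dot cobordism. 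Hence the right-hand side of the dictionary can equivalently be read off from the $\CobxD$-picture using dot cobordisms in place of $S^2$.

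Next I would apply Theorem~7.1 of~\cite{HHHK}, which identifies Bar-Natan's dotted cobordism category of planar crossingless tangles with the additive enlargement of $\HHHKSubAlg$. Combined with the delooping discussed in Remark~\ref{rem:delooping}---which lets one restrict attention to the subcategory generated by the two basic crossingless tangles $\Lo$ and $\Li$---this yields an isomorphism $\CobxD\cong\HHHKSubAlg$ sending $\Lo\leftrightarrow L_0$, $\Li\leftrightarrow L_1$, identity cobordisms to $a_0$ and $a_1$, dot cobordisms to $c_0$ and $c_1$, and the two saddle cobordisms to $p_{01}$ and $p_{10}$. Composing with the first identification then produces exactly the dictionary in the statement.

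The hard part is bookkeeping rather than mathematics. I would need to verify that conventions match across the two sources: in particular, the excerpt already flags that the roles of $p_{10}$ and $q_{10}$ have been swapped relative to~\cite{HHHK}, so I would carefully confirm that under the HHHK conventions the saddle $S\co\DotC\to\DotB$ really corresponds to $p_{01}\in\CF(L_1,L_0)$ rather than to $q_{01}$ or to $p_{10}$, and likewise that $c_i$ is represented in Bar-Natan's category by a dot on the component disjoint from $\ast$ so that the identification $c_i\leftrightarrow S^2$ survives the composition. Once these convention checks are settled the dictionary follows directly, and no further calculation is required.
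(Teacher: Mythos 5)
Your proposal is correct and follows essentially the same route as the paper: the paper also treats this theorem as a direct reformulation, composing the restriction $\BNAlg\cong\CobxD$ from Observation~\ref{obs:identification:BNAlg} with the identification of Bar-Natan's dotted cobordism category (after delooping) with the additive enlargement of $\HHHKSubAlg$ from \cite[Theorem~7.1]{HHHK}. Your added remarks that $H=0$ forces $D=S^2$ in $\BNAlg$ and that the convention swap of $p_{10}$ and $q_{10}$ must be checked are exactly the bookkeeping implicit in the paper's statement.
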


\section{Complex-valued tangle invariants}\label{sec:tangle_invariants}

We recall the definitions of the invariants \(\DD_1(T)\) and \(L_T\) associated with a pointed 4-ended tangle \(T\). A schematic picture is shown in Figure~\ref{fig:diagram:Mod:woFigureEightAlg}, which we will explain in more detail throughout this section. To give just a rough overview, in the center of this diagram we see various categories of complexes over the algebras from Section~\ref{sec:algebras}. The relationship between those categories is informed by the relationship between the corresponding algebras summarized in Figure~\ref{fig:diagram:Algebras:woFigureEightAlg}. 
The top and bottom of the diagram show the intermediate steps in the definitions of $\DD_1(T)$ and $L_T$. 

\begin{figure}[ht]
  \centering
  $
  \begin{tikzcd}[column sep=1.5cm,row sep=0.4cm]
  &
  \KhTl{T}
  \arrow[mapsto]{r}
  \arrow[d, phantom, "\rotatebox{-90}{$\in$}"]
  &
  \DD(T)^{\BNAlgH}
  \arrow[mapsto]{r}
  \arrow[d, phantom, "\rotatebox{-90}{$\in$}"]
  &
  \DD_1(T)
  \arrow[d, phantom, "\rotatebox{-90}{$\in$}"]
  \\
  &
  \Mod^{\Cobl}
  \arrow[equal]{r}{\sim}
  \arrow{dd}[swap]{H=0}
  \arrow[dd, phantom, ""{name=A}]
  &
  \Mod^{\BNAlgH}
  \arrow{r}{-\bt \mcIbim}
  \arrow{dd}[swap]{H=0}
  \arrow[dd, phantom, ""{name=B}]
  &
  \Mod^{\BNAlgH}
  \arrow[phantom, from=A, to=B, "\square"]
  \\
  T
  \arrow[in=180,out=45,mapsto]{uur}
  \arrow[in=180,out=-45,mapsto]{ddr}
  \arrow[phantom, to=A, "\triangleleft"]
  \\
  &
  \Mod^{\Cobx}
  \arrow[equal]{r}{\sim}
  &
  \Mod^{\BNAlg}
  \arrow[equal]{r}{\sim}
  &
  \Mod^{\HHHKSubAlg}
  \arrow[hook]{r}{}
  &
  \Mod^{\HHHKAlg}
  \\
  &
  \KhTbx{T}
  \arrow[mapsto]{r}
  \arrow[bend right=20,mapsto]{rrr}[name=E, swap]{\text{\protect\cite{HHHK}}}
  \arrow[u, phantom, "\rotatebox{90}{$\in$}"]
  &
  \DD(T)^{\BNAlg}
  \arrow[mapsto]{r}[name=F]{}
  \arrow[u, phantom, "\rotatebox{90}{$\in$}"]
  &
  \DD(T)^{\HHHKSubAlg}
  \arrow[mapsto]{r}
  \arrow[u, phantom, "\rotatebox{90}{$\in$}"]
  &
  L_T
  \arrow[u, phantom, "\rotatebox{90}{$\in$}"]
  \arrow[phantom, from=E, to=F, "\diamond"]
  \end{tikzcd}
  $
  \caption{A schematic overview of the construction of the tangle invariants $\DD_1(T)$ and $L_T$. The triangle $\triangleleft$ and the squares $\square$ and $\diamond$ commute. }\label{fig:diagram:Mod:woFigureEightAlg}
\end{figure}
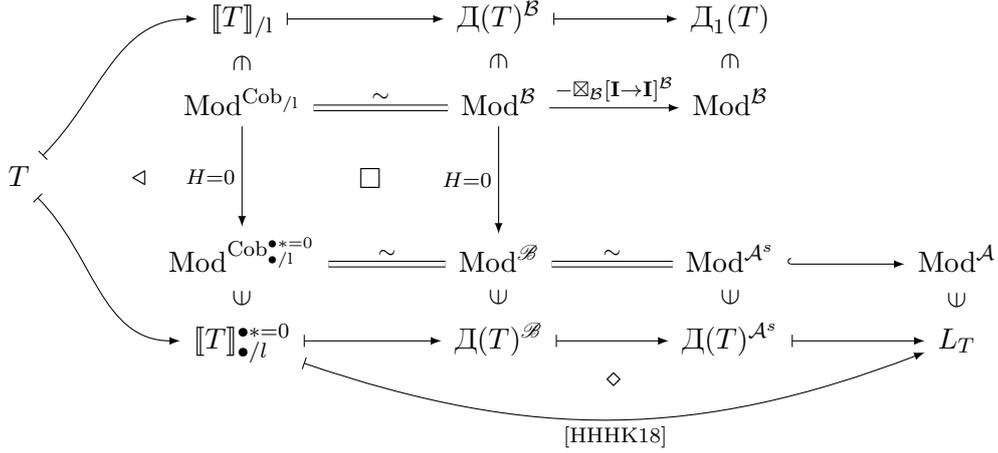

\subsection{\texorpdfstring{The invariant \(\DD_1(T)\)}{The invariant Д₁(T)}}

The definition of \(\DD_1(T)\) starts with Bar-Natan's invariant \( \KhTl{T}\) of oriented tangles \(T\) in the 3-ball~\cite{BarNatanKhT}. This invariant generalizes Khovanov homology of links in the 3-sphere. 
Like Khovanov's link invariant, \(\KhTl{T}\) takes the form of a bigraded chain complex  which is well-defined up to chain homotopy equivalence. 
However, while the link invariant is a chain complex over $\F$, \(\KhTl{T}\) is a chain complex over the cobordism category \(\Cobl\) from Section~\ref{sec:algebras:Cob}.  

The next step relies in an essential way on the fact that we work with pointed 4-ended tangles. First, we deloop the chain complex \(\KhTl{T}\), thus writing it as a chain complex over $\CoblD$, as explained in Remark~\ref{rem:delooping}. Then, using the isomorphism from Theorem~\ref{thm:identification:BNAlgH} between $\CoblD$ and the algebra $\BNAlgH$ specified by the distinguished endpoint $\ast$ of $T$, we obtain the type~D structure $\DD(T)^{\BNAlgH}$. We highlight that the choice of $\ast$ simply specifies a particular basis of the morphism spaces inside $\Cobl$.

In the last step, we take the mapping cone of the $H$ times the identity map on $\DD(T)^{\BNAlgH}$:
$$
\DD_1(T)
\coloneqq 
[\DD(T)^{\BNAlgH} \xrightarrow{H\cdot \id} \DD(T)^{\BNAlgH}]
$$
Equivalently, we can view \(\DD_1(T)\) as the result of taking the box tensor product of $\DD(T)^{\BNAlgH}$ with the type~AD bimodule $\mcIbim$ defined by
$$
\mcIbim \coloneqq [\IdBimodBNAlg  \xrightarrow{(-|H)}  \IdBimodBNAlg]
$$
where $\IdBimodBNAlg$ denotes the identity bimodule of the algebra $\BNAlgH$. 

\subsection{\texorpdfstring{The invariant $L_T$}{The invariant L\_T}}

The first step in the definition of $L_T$ is very similar to that of $\DD_1(T)$. The authors of~\cite{HHHK} also start with some version of Bar-Natan's tangle invariant over a cobordism category. However, they work over $\Cobx$ instead of $\Cobl$. We denote the corresponding tangle invariant by $\KhTbx{T}$. Then they observe that the algebra $\CobxD$ is isomorphic to the subalgebra $\HHHKSubAlg$ of $\HHHKAlg$ \cite[Proposition~6.3]{HHHK}. This allows them to construct a functor from $\Cobx$ to the additive enlargement of $\HHHKAlg$ \cite[Theorem~7.1]{HHHK}. This functor induces a functor from $\Mod^{\Cobx}$ to $\Mod^{\HHHKAlg}$, and the image of $\KhTbx{T}$ under this functor is $L_T$. 

\subsection{A preliminary comparison of the two invariants}

It its clear from the construction that the image of $\KhTl{T}$ under the functor induced by the quotient map $\Cobl\rightarrow\Cobx$ is equal to $\KhTbx{T}$, explaining the commutativity of the triangle labelled $\triangleleft$ on the left of Figure~\ref{fig:diagram:Mod:woFigureEightAlg}.

Furthermore, the single step from $\KhTbx{T}$ to $L_T$ can be broken into three steps, as shown at the bottom of that diagram. Namely, we may first deloop $\KhTbx{T}$, ie write it as a chain complex over the full subcategory $\CobxD$ and then use the isomorphism from Observation~\ref{obs:identification:BNAlg} to obtain a complex $\DD(T)^{\BNAlg}$ over $\BNAlg$. The commutativity of the square $\square$ in Figure~\ref{fig:diagram:Algebras:woFigureEightAlg} implies the commutativity of the square labelled $\square$ in Figure~\ref{fig:diagram:Mod:woFigureEightAlg}. Then, we may use the isomorphism from Theorem~\ref{thm:identification:BNAlgH} to obtain a complex $\DD(T)^{\HHHKSubAlg}$ over $\HHHKSubAlg$. We may regard $\DD(T)^{\HHHKSubAlg}$ as a complex over $\HHHKAlg$ via the embedding $\Mod^{\HHHKSubAlg}\hookrightarrow\Mod^{\HHHKAlg}$. We claim that this complex agrees with $L_T$.

Indeed, the composition of the isomorphisms between $\CobxD$, $\BNAlg$ and $\HHHKSubAlg$ agrees with the isomorphism between $\CobxD$ and $\HHHKSubAlg$ used in~\cite{HHHK}. So the only difference between the two constructions is that the authors of~\cite{HHHK} do not use delooping, but work with chain complexes over the \emph{whole} category $\Cobx$ instead. To define the embedding of $\Cobx$ into the additive enlargement of $\HHHKAlg$, the authors describe all the morphisms in the category $\Cobx$, see~\cite[Proposition~6.5]{HHHK}. That proposition, in its content, is equivalent to delooping. So we see that the square labelled $\diamond$ in Figure~\ref{fig:diagram:Mod:woFigureEightAlg} also commutes. 

The main conclusion from the commutativity of the whole diagram in Figure~\ref{fig:diagram:Mod:woFigureEightAlg} is the following:

\begin{proposition}
The tangle invariants $\DD_1(T)$ and $L_T$ can be both recovered from the invariant $\DD(T)^{\BNAlgH}$. 
\end{proposition}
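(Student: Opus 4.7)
The plan is to read off both recoveries directly from the commutative diagram in Figure~\ref{fig:diagram:Mod:woFigureEightAlg}, whose commutativity was verified in the preceding subsection. The proposition asserts that $\DD_1(T)$ and $L_T$ each arise as the image of $\DD(T)^{\BNAlgH}$ under an explicit functor, so the task reduces to identifying those functors by tracing the two routes emanating from $\DD(T)^{\BNAlgH}$ in the diagram.

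For $\DD_1(T)$, the recovery is tautological from the definition given in Section~\ref{sec:tangle_invariants}: one has
\[
\DD_1(T) \;=\; \bigl[\DD(T)^{\BNAlgH}\xrightarrow{H\cdot\id}\DD(T)^{\BNAlgH}\bigr] \;=\; \DD(T)^{\BNAlgH}\bt\mcIbim,
\]
so there is literally nothing to check on this side. For $L_T$, I would trace $\DD(T)^{\BNAlgH}$ along the composition of three functors in the lower portion of the diagram: first the quotient functor $\Mod^{\BNAlgH}\to\Mod^{\BNAlg}$ induced by setting $H=0$; second, the isomorphism $\Mod^{\BNAlg}\xrightarrow{\sim}\Mod^{\HHHKSubAlg}$ supplied by the $\BNAlg\cong\HHHKSubAlg$ dictionary; and third, the embedding $\Mod^{\HHHKSubAlg}\hookrightarrow\Mod^{\HHHKAlg}$. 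Commutativity of the square $\square$ identifies the output of the first step with $\DD(T)^{\BNAlg}$, and commutativity of the square $\diamond$ identifies the composition of the remaining two steps with the assignment $\DD(T)^{\BNAlg}\mapsto L_T$ used in~\cite{HHHK}. Concatenating yields $L_T$ as a functorial output of $\DD(T)^{\BNAlgH}$.

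There is no substantive obstacle here: the proposition is in effect a summary of the diagrammatic work already carried out, and all the real labour lies in the verifications of the commutativity of the squares $\square$ and $\diamond$, which were dispatched in the preceding paragraphs. The only point to emphasise in writing up is that the $H=0$ quotient functor on the type~D side agrees, via Theorem~\ref{thm:identification:BNAlgH} and Observation~\ref{obs:identification:BNAlg}, with the delooping-then-quotient procedure used to define $\DD(T)^{\BNAlg}$ directly from $\KhTbx{T}$; this is exactly the content of the square $\square$.
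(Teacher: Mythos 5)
Your argument is exactly the paper's: the proposition is stated there as an immediate consequence of the commutativity of the diagram in Figure~\ref{fig:diagram:Mod:woFigureEightAlg}, with the $\DD_1(T)$ side being tautological from $\DD_1(T)=\DD(T)^{\BNAlgH}\bt\mcIbim$ and the $L_T$ side obtained by tracing the $H=0$ quotient, the $\BNAlg\cong\HHHKSubAlg$ identification, and the embedding into $\Mod^{\HHHKAlg}$ through the squares $\square$ and $\diamond$. Your write-up just makes explicit the tracing that the paper leaves implicit, so it is correct and essentially identical in approach.
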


\subsection{Prospectus on the Main Theorem}
The strategy for the proof of the Main Theorem is as follows: we define yet another pair of categories, namely the full subcategory $\FigureEightAlg$ of $\Mod^{\BNAlgH}$ generated by the two mapping cones, and its subcategory $\FigureEightSubAlg$. We then construct a quasi-isomorphism between $\FigureEightAlg$ and $\HHHKAlg$ which restricts to a quasi-isomorphism between $\FigureEightSubAlg$ and $\HHHKSubAlg$. This allows us to view $L_T$ as an object in $\Mod^{\BNAlgH}$ that can be compared to $\DD_1(T)$; see Figures~\ref{fig:diagram:Algebras:overview} and~\ref{fig:diagram:Mod:overview}.

\section{\texorpdfstring{An embedding of $\HHHKAlg$ into $\Mod^{\BNAlgH}$}{An embedding of the algebra A into Mod\^\ B}}\label{sec:embedding}

We define a full subcategory $\FigureEightAlg$ of $\Mod^{\BNAlgH}$ and a subcategory $\FigureEightSubAlg$ of $\FigureEightAlg$ and show that they are quasi-isomorphic to $\HHHKAlg$ and $\HHHKSubAlg$, respectively. This allows us to embed $\HHHKAlg$ into $\Mod^{\BNAlgH}$. 
Figure~\ref{fig:diagram:Algebras:overview} shows how the categories $\FigureEightAlg$ and $\FigureEightSubAlg$ extend the diagram from Figure~\ref{fig:diagram:Algebras:woFigureEightAlg}. 

\begin{figure}[ht]
  $$
  \begin{tikzcd}[column sep=1.5cm,row sep=1.2cm]
  \CoblD
  \arrow[equal]{r}{\sim}
  \arrow{d}[swap]{H=0}
  \arrow[d, phantom, ""{name=A}]
  &
  \BNAlgH
  \arrow{r}{}
  \arrow[bend left=15]{rrr}{\mcImap}
  \arrow{d}[swap]{H=0}
  \arrow[d, phantom, ""{name=B}]
  &
  \FigureEightSubAlg
  \arrow[hook]{r}{}
  \arrow[d, phantom, ""{name=C}]
  &
  \FigureEightAlg
  \arrow[hook]{r}{}
  \arrow[d, phantom, ""{name=D}]
  &
  \Mod^{\BNAlgH}
  \arrow[phantom, from=A, to=B, "\square"]
  \arrow[phantom, from=B, to=C, "\boxdot"]
  \arrow[phantom, from=C, to=D, "\square"]
  \\
  \CobxD
  \arrow[equal]{r}{\sim}
  &
  \BNAlg
  \arrow[equal]{r}{\sim}
  &
  \HHHKSubAlg
  \arrow[hook]{r}{}
  \arrow{u}[left]{\rotatebox{90}{$\sim$}}[right]{\Fmap }
  &
  \HHHKAlg
  \arrow{u}[left]{\rotatebox{90}{$\sim$}}[right]{\Fmap }
  \end{tikzcd}
  $$
  \caption{An update of the commutative diagram from Figure~\ref{fig:diagram:Algebras:woFigureEightAlg} which now includes the algebras $\FigureEightAlg$ and $\FigureEightSubAlg$, the functor $\Fmap $ defined in Section~\ref{sec:embedding}, and the functor $\mcImap$ corresponding to the type~AD bimodule $\mcIbim$. In Section~\ref{sec:proof_of_main_theorem}, we will show that the square labelled $\boxdot$ commutes up to homotopy. }\label{fig:diagram:Algebras:overview}
\end{figure}

\begin{definition}
  Let $\FigureEightAlg$ be the full subcategory of $\Mod^{\BNAlgH}$ generated by the two objects 
  $$
  [\DotB \xrightarrow{H} \DotB]
  \quad
  \text{ and }
  \quad
  [\DotC \xrightarrow{H} \DotC]
  $$ 
\end{definition}

\begin{figure}[p]
  $$
  \begin{aligned}
  A^k_{0}\coloneqq&
  \littlemor{
    \DotB
    \arrow[litARR]{d}
    \arrow[litarr]{r}[litlab]{S^{2k}}
    \&
    \DotB
    \arrow[litARR]{d}
    \\
    \DotB
    \arrow[litarr]{r}[litlab]{S^{2k}}
    \&
    \DotB
  }
  &
  B^k_{0}\coloneqq&
  \littlemor{
    \DotB
    \arrow[litARR]{d}
    \arrow[litarr]{rd}[litlab]{S^{2k}}
    \&
    \DotB
    \arrow[litARR]{d}
    \\
    \DotB
    \&
    \DotB
  }
  &
  C^l_{0}\coloneqq&
  \littlemor{
    \DotB
    \arrow[litARR]{d}
    \arrow[litarr]{r}[litlab]{D^{l}}
    \&
    \DotB
    \arrow[litARR]{d}
    \\
    \DotB
    \arrow[litarr]{r}[litlab]{D^{l}}
    \&
    \DotB
  }
  &
  D^l_{0}\coloneqq&
  \littlemor{
    \DotB
    \arrow[litARR]{d}
    \arrow[litarr]{rd}[litlab]{D^{l}}
    \&
    \DotB
    \arrow[litARR]{d}
    \\
    \DotB
    \&
    \DotB
  }
  \\
  A^k_{1}\coloneqq&
  \littlemor{
    \DotC
    \arrow[litARR]{d}
    \arrow[litarr]{r}[litlab]{S^{2k}}
    \&
    \DotC
    \arrow[litARR]{d}
    \\
    \DotC
    \arrow[litarr]{r}[litlab]{S^{2k}}
    \&
    \DotC
  }
  &
  B^k_{1}\coloneqq&
  \littlemor{
    \DotC
    \arrow[litARR]{d}
    \arrow[litarr]{rd}[litlab]{S^{2k}}
    \&
    \DotC
    \arrow[litARR]{d}
    \\
    \DotC
    \&
    \DotC
  }
  &
  C^l_{1}\coloneqq&
  \littlemor{
    \DotC
    \arrow[litARR]{d}
    \arrow[litarr]{r}[litlab]{D^{l}}
    \&
    \DotC
    \arrow[litARR]{d}
    \\
    \DotC
    \arrow[litarr]{r}[litlab]{D^{l}}
    \&
    \DotC
  }
  &
  D^l_{1}\coloneqq&
  \littlemor{
    \DotC
    \arrow[litARR]{d}
    \arrow[litarr]{rd}[litlab]{D^{l}}
    \&
    \DotC
    \arrow[litARR]{d}
    \\
    \DotC
    \&
    \DotC
  }
  \\
  P^{l}_{10}\coloneqq&
  \littlemor{
    \DotB
    \arrow[litARR]{d}
    \arrow[litarr]{r}[litlab]{S^{2l-1}}
    \&
    \DotC
    \arrow[litARR]{d}
    \\
    \DotB
    \arrow[litarr]{r}[litlab]{S^{2l-1}}
    \&
    \DotC
  }
  &
  Q^l_{10}\coloneqq&
  \littlemor{
    \DotB
    \arrow[litARR]{d}
    \arrow[litarr]{rd}[litlab]{S^{2l-1}}
    \&
    \DotC
    \arrow[litARR]{d}
    \\
    \DotB
    \&
    \DotC
  }
  &
  P^{l}_{01}\coloneqq&
  \littlemor{
    \DotC
    \arrow[litARR]{d}
    \arrow[litarr]{r}[litlab]{S^{2l-1}}
    \&
    \DotB
    \arrow[litARR]{d}
    \\
    \DotC
    \arrow[litarr]{r}[litlab]{S^{2l-1}}
    \&
    \DotB
  } 
  &
  Q^l_{01}\coloneqq&
  \littlemor{
    \DotC
    \arrow[litARR]{d}
    \arrow[litarr]{rd}[litlab]{S^{2l-1}}
    \&
    \DotB
    \arrow[litARR]{d}
    \\
    \DotC
    \&
    \DotB
  } 
  \end{aligned}
  $$
  \caption{A basis of the kernel of the differential of the dg algebra $\FigureEightAlg$ as a vector space over $\fieldTwoElements$. The vertical arrows are labelled by $H=D+S^2$, and $k\geq0, l\geq1$.}\label{fig:FigureEightAlg:basisI}
\end{figure}

\begin{figure}[p]
    $$
    \begin{aligned}
    \hat{A}^k_{0}\coloneqq&
    \littlemor{
        \DotB
        \arrow[litARR]{d}
        \&
        \DotB
        \arrow[litARR]{d}
        \\
        \DotB
        \arrow[litarr]{ru}[litlab]{S^{2k}}
        \&
        \DotB
    }
    &
    \hat{B}^k_{0}\coloneqq&
    \littlemor{
        \DotB
        \arrow[litARR]{d}
        \&
        \DotB
        \arrow[litARR]{d}
        \\
        \DotB
        \arrow[litarr]{r}[litlab]{S^{2k}}
        \&
        \DotB
    }
    &
    \hat{C}^l_{0}\coloneqq&
    \littlemor{
        \DotB
        \arrow[litARR]{d}
        \&
        \DotB
        \arrow[litARR]{d}
        \\
        \DotB
        \arrow[litarr]{ru}[litlab]{D^{l}}
        \&
        \DotB
    }
    &
    \hat{D}^l_{0}\coloneqq&
    \littlemor{
        \DotB
        \arrow[litARR]{d}
        \&
        \DotB
        \arrow[litARR]{d}
        \\
        \DotB
        \arrow[litarr]{r}[litlab]{D^{l}}
        \&
        \DotB
    }\\
    \hat{A}^k_{1}\coloneqq&
    \littlemor{
        \DotC
        \arrow[litARR]{d}
        \&
        \DotC
        \arrow[litARR]{d}
        \\
        \DotC
        \arrow[litarr]{ru}[litlab]{S^{2k}}
        \&
        \DotC
    }
    &
    \hat{B}^k_{1}\coloneqq&
    \littlemor{
        \DotC
        \arrow[litARR]{d}
        \&
        \DotC
        \arrow[litARR]{d}
        \\
        \DotC
        \arrow[litarr]{r}[litlab]{S^{2k}}
        \&
        \DotC
    }
    &
    \hat{C}^l_{1}\coloneqq&
    \littlemor{
        \DotC
        \arrow[litARR]{d}
        \&
        \DotC
        \arrow[litARR]{d}
        \\
        \DotC
        \arrow[litarr]{ru}[litlab]{D^{l}}
        \&
        \DotC
    }
    &
    \hat{D}^l_{1}\coloneqq&
    \littlemor{
        \DotC
        \arrow[litARR]{d}
        \&
        \DotC
        \arrow[litARR]{d}
        \\
        \DotC
        \arrow[litarr]{r}[litlab]{D^{l}}
        \&
        \DotC
    }
    \\
    \hat{P}^l_{10}\coloneqq&
    \littlemor{
        \DotB
        \arrow[litARR]{d}
        \&
        \DotC
        \arrow[litARR]{d}
        \\
        \DotB
        \arrow[litarr]{ru}[litlab]{S^{2l-1}}
        \&
        \DotC
    }
    &
    \hat{Q}^l_{10}\coloneqq&
    \littlemor{
        \DotB
        \arrow[litARR]{d}
        \&
        \DotC
        \arrow[litARR]{d}
        \\
        \DotB
        \arrow[litarr]{r}[litlab]{S^{2l-1}}
        \&
        \DotC
    }
    &
    \hat{P}^l_{01}\coloneqq&
    \littlemor{
        \DotC
        \arrow[litARR]{d}
        \&
        \DotB
        \arrow[litARR]{d}
        \\
        \DotC
        \arrow[litarr]{ru}[litlab]{S^{2l-1}}
        \&
        \DotB
    }
    &
    \hat{Q}^l_{01}\coloneqq&
    \littlemor{
        \DotC
        \arrow[litARR]{d}
        \&
        \DotB
        \arrow[litARR]{d}
        \\
        \DotC
        \arrow[litarr]{r}[litlab]{S^{2l-1}}
        \&
        \DotB
    }
    \end{aligned}
    $$
    \caption{Families of morphisms of the dg algebra $\FigureEightAlg$ which complete those morphisms from Figure~\ref{fig:FigureEightAlg:basisI} to a basis of $\FigureEightAlg$ as a vector space over $\fieldTwoElements$. Notation is that of Figure~\ref{fig:FigureEightAlg:basisI}.}\label{fig:FigureEightAlg:basisII}
\end{figure}

\begin{figure}[p]
  \begin{align*}
  \Fmap ^2(p_{01},p_{10})&=
  \littlemor{
      \DotB
      \arrow[litARR]{d}
      \&
      \DotB
      \arrow[litARR]{d}
      \\
      \DotB
      \arrow[litarr]{ru}[litlab]{1}
      \&
      \DotB
  }=\hat{A}^0_0
  &
  \Fmap ^2(p_{10},p_{01})&=
  \littlemor{
      \DotC
      \arrow[litARR]{d}
      \&
      \DotC
      \arrow[litARR]{d}
      \\
      \DotC
      \arrow[litarr]{ru}[litlab]{1}
      \&
      \DotC
  }=\hat{A}^0_1
  \\
  \Fmap ^2(c_{0},c_{0})&=
  \littlemor{
      \DotB
      \arrow[litARR]{d}
      \&
      \DotB
      \arrow[litARR]{d}
      \\
      \DotB
      \arrow[litarr]{ru}[litlab]{D}
      \&
      \DotB
  }=\hat{C}^1_0
  &
  \Fmap ^2(c_{1},c_{1})&=
  \littlemor{
      \DotC
      \arrow[litARR]{d}
      \&
      \DotC
      \arrow[litARR]{d}
      \\
      \DotC
      \arrow[litarr]{ru}[litlab]{D}
      \&
      \DotC
  }=\hat{C}^1_1
  \\
  \Fmap ^2(q_{01},p_{10})&=
  \littlemor{
      \DotB
      \arrow[litARR]{d}
      \&
      \DotB
      \arrow[litARR]{d}
      \\
      \DotB
      \arrow[litarr]{r}[litlab]{1}
      \&
      \DotB
  }=\hat{B}^0_0
  &
  \Fmap ^2(q_{10},p_{01})&=
  \littlemor{
      \DotC
      \arrow[litARR]{d}
      \&
      \DotC
      \arrow[litARR]{d}
      \\
      \DotC
      \arrow[litarr]{r}[litlab]{1}
      \&
      \DotC
  }=\hat{B}^0_1
  \\
  \Fmap ^2(p_{01},q_{10})&=
  \littlemor{
      \DotB
      \arrow[litARR]{d}
      \&
      \DotB
      \arrow[litARR]{d}
      \\
      \DotB
      \arrow[litarr]{r}[litlab]{1}
      \&
      \DotB
  }=\hat{B}^0_0
  &
  \Fmap ^2(p_{10},q_{01})&=
  \littlemor{
      \DotC
      \arrow[litARR]{d}
      \arrow[litarr]{r}[litlab]{1}
      \&
      \DotC
      \arrow[litARR]{d}
      \\
      \DotC
      \&
      \DotC
  }=A^0_1+\hat{B}^0_1
  \\
  \Fmap ^2(d_0,c_0)&=
  \littlemor{
      \DotB
      \arrow[litARR]{d}
      \&
      \DotB
      \arrow[litARR]{d}
      \\
      \DotB
      \arrow[litarr]{r}[litlab]{D}
      \&
      \DotB
  }=\hat{D}^1_0
  &
  \Fmap ^2(d_1,c_1)&=
  \littlemor{
      \DotC
      \arrow[litARR]{d}
      \&
      \DotC
      \arrow[litARR]{d}
      \\
      \DotC
      \arrow[litarr]{r}[litlab]{D}
      \&
      \DotC
  }=\hat{D}^1_1
  \\
  \Fmap ^2(c_0,d_0)&=
  \littlemor{
      \DotB
      \arrow[litARR]{d}
      \arrow[litarr]{r}[litlab]{D}
      \&
      \DotB
      \arrow[litARR]{d}
      \\
      \DotB
      \&
      \DotB
  }=C^1_0+\hat{D}^1_0
  &
  \Fmap ^2(c_1,d_1)&=
  \littlemor{
      \DotC
      \arrow[litARR]{d}
      \arrow[litarr]{r}[litlab]{D}
      \&
      \DotC
      \arrow[litARR]{d}
      \\
      \DotC
      \&
      \DotC
  }=C^1_1+\hat{D}^1_1
  \\
  \Fmap ^3(c_0,d_0,c_0)&=
  \littlemor{
      \DotB
      \arrow[litARR]{d}
      \&
      \DotB
      \arrow[litARR]{d}
      \\
      \DotB
      \arrow[litarr]{ru}[litlab]{D}
      \&
      \DotB
  }=\hat{C}^1_0
  &
  \Fmap ^3(c_1,d_1,c_1)&=
  \littlemor{
      \DotC
      \arrow[litARR]{d}
      \&
      \DotC
      \arrow[litARR]{d}
      \\
      \DotC
      \arrow[litarr]{ru}[litlab]{D}
      \&
      \DotC
  }=\hat{C}^1_1
  \end{align*}
  \caption{The non-zero actions of $\Fmap ^2$ and $\Fmap ^3$. The asymmetry between $\Fmap ^2(p_{01},q_{10})$ and $\Fmap ^2(p_{10},q_{01})$ is forced by the asymmetry of the $A_\infty$-operations of $\HHHKAlg$, see Remark~\ref{rem:asymmetry:HHHK}.}\label{fig:Functor:2actions}
\end{figure}

A basis of the algebra $\FigureEightAlg$ as a vector space over $\fieldTwoElements$ is shown in Figures~\ref{fig:FigureEightAlg:basisI} and~\ref{fig:FigureEightAlg:basisII}. The multiplication and the differential is inherited from $\Mod^{\BNAlgH}$. $A^0_0$ and $A^0_1$ are the identity morphisms. The non-zero actions of the differential $\mu^1$ are given by
$$
\mu^1(\hat{X}^{k})=X^{k+1}
\quad\text{for all }k\geq1
\text{ and } 
X\in\{A_i,B_i,C_i,D_i,P_{j},Q_{j}\mid i\in\{0,1\}, j\in\{01,10\}\}$$
and
\begin{align*}
\mu^1(\hat{A}^0_{i})=A^1_{i}+C^1_{i} 
\quad\text{ and }\quad
\mu^1(\hat{B}^0_{i})=B^1_{i}+D^1_{i}
\quad\text{for } i\in\{0,1\}.
\end{align*}

\begin{definition}\label{fig:Functor}
    Let $\Fmap \co\HHHKAlg\rightarrow\FigureEightAlg$ be the $A_\infty$-functor defined as follows. On the level of objects, we set
    $$
    \Fmap (L_0)=[\DotB \xrightarrow{H} \DotB]
    \quad\text{ and }\quad
    \Fmap (L_1)=[\DotC \xrightarrow{H} \DotC].
    $$
    We then define
    $$
    \Fmap ^1(a_i)=A^0_i
    \quad
    \Fmap ^1(b_i)=B^0_i
    \quad
    \Fmap ^1(c_i)=C^1_i
    \quad
    \Fmap ^1(d_i)=D^1_i
    \quad
    \Fmap ^1(p_j)=P^1_j
    \quad
    \Fmap ^1(q_j)=Q^1_j
    $$
    for all $i\in\{0,1\}$ and $j\in\{01,10\}$. The non-zero actions of $\Fmap ^2$ and $\Fmap ^3$  are shown in Figure~\ref{fig:Functor:2actions}.
    Furthermore, $\Fmap ^k=0$ for $k\geq4$.
\end{definition}

\begin{lemma}
    \(\Fmap \) is a well-defined $A_\infty$-functor.
\end{lemma}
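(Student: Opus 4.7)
The plan is to verify the $A_\infty$-functor relations directly, exploiting the fact that the operations in both source and target are very sparse. In $\HHHKAlg$ only $\mu^2$ and $\mu^3$ are nonzero (Theorem~\ref{thm:HHHK:Algebra}), while $\FigureEightAlg$ inherits a dg-category structure from $\Mod^{\BNAlgH}$ (since $\BNAlgH$ is an honest associative algebra), so only $\mu^1$ and $\mu^2$ are nonzero in the target. Combined with $\Fmap^k = 0$ for $k\geq 4$, a dimension count shows that the $A_\infty$-functor relation for $m$ composable inputs is automatically trivial for $m\geq 7$, and for $m \leq 6$ it has only finitely many terms on each side.

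The verification proceeds by increasing $m$. For $m=1$ the only relation is $\mu^1 \circ \Fmap^1 = 0$, which is immediate from Figure~\ref{fig:FigureEightAlg:basisI}: every generator in the image of $\Fmap^1$ lies in the basis of $\ker \mu^1$ listed there. For $m=2$, one checks for each pair $(x,y)$ of composable generators of $\HHHKAlg$ the identity
\[
\Fmap^1(\mu^2(x,y)) = \mu^2(\Fmap^1(x),\Fmap^1(y)) + \mu^1(\Fmap^2(x,y)),
\]
by computing composition of chain maps between the two mapping cones $[\DotB \xrightarrow{H}\DotB]$ and $[\DotC\xrightarrow{H}\DotC]$ inside $\Mod^{\BNAlgH}$, and then using the explicit $\mu^1$-action (which sends $\hat{A}^0_i\mapsto A^1_i+C^1_i$ and $\hat{B}^0_i\mapsto B^1_i+D^1_i$, and all other listed generators to their unhatted counterpart with shifted index) to check the correction terms coming from $\Fmap^2$ in Figure~\ref{fig:Functor:2actions}. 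This is where the asymmetry between $\Fmap^2(p_{01},q_{10})$ and $\Fmap^2(p_{10},q_{01})$ enters: the first composition in $\Mod^{\BNAlgH}$ already lies in $\ker\mu^1$, while the second does not, and the extra $A^0_1$ term exactly matches the asymmetric $\mu^2$'s of $\HHHKAlg$ noted in Remark~\ref{rem:asymmetry:HHHK}.

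For $m=3$ one verifies
\[
\mu^1(\Fmap^3) + \mu^2(\Fmap^1,\Fmap^2) + \mu^2(\Fmap^2,\Fmap^1) = \Fmap^1(\mu^3) + \Fmap^2(\mu^2,\cdot) + \Fmap^2(\cdot,\mu^2),
\]
which is the most intricate step, since both $\mu^3$ in the source and $\Fmap^3$ in the target contribute nontrivially and must be cross-checked against the extensive list in Theorem~\ref{thm:HHHK:Algebra}. For $m=4,5,6$ the relations no longer involve $\mu^1$ or $\Fmap^1$ on the outer layer and become purely quadratic identities in $\Fmap^2$, $\Fmap^3$ and the source $\mu^2$, $\mu^3$; since $\Fmap^2$ and $\Fmap^3$ take values in a very restricted set of basis elements (essentially $\hat{A}^0_i, \hat{B}^0_i, \hat{C}^1_i, \hat{D}^1_i$ and the $A^0_i, C^1_i$ corrections), these collapse to a handful of local identities that can be checked by inspection, case by case on the idempotents.

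The main obstacle is not conceptual but organizational: the sheer number of triples and quadruples of composable generators in $\HHHKAlg$ (there are eight generators per self-morphism space and four between the two objects) makes the $m=3$ step tedious, and every case involves remembering the dual roles played by $S^2$ and $D$ modulo $H = D+S^2$, which mediates the difference between the hatted and unhatted families in Figures~\ref{fig:FigureEightAlg:basisI}--\ref{fig:FigureEightAlg:basisII}. I would organize the $m=3$ check by the target idempotent pair and exploit the evident $0\leftrightarrow 1$ symmetry to halve the work, carrying out the $L_0$-self-morphism cases in detail and indicating that the $L_1$-cases are analogous.
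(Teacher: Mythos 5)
Your argument is correct and is essentially the paper's proof: both bound the $A_\infty$-functor relations to sequences of length at most six using the vanishing of the higher operations of $\HHHKAlg$, of $\mu^{\geq 3}_{\FigureEightAlg}$ and of $\Fmap^{\geq 4}$, handle length one and length six directly, and then verify the remaining finitely many relations case by case --- the only real difference being that the paper delegates the lengths $2$ through $5$ to a computer script rather than to an organized hand computation. One small correction: at $m=4$ the term $\mu^2_{\FigureEightAlg}(\Fmap^1(\cdot),\Fmap^3(\cdot,\cdot,\cdot))$ still involves $\Fmap^1$, so those intermediate relations are not purely quadratic in $\Fmap^2$, $\Fmap^3$ alone, but this does not affect the strategy.
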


\begin{proof}
  We only need to check the $A_\infty$-relations on sequences of length at most six, because the multiplications $\mu^i_{\HHHKAlg}$ in $\HHHKAlg$ vanish for $i\neq2,3$, the multiplications $\mu^i_{\FigureEightAlg}$ in $\FigureEightAlg$ vanish for $i\neq1,2$ and $\Fmap ^j$ vanishes for $j\neq1,2,3$. Moreover, for sequences of length six, the only possible non-vanishing term in the $A_\infty$-relation is 
  $$
  \mu^2_{\FigureEightAlg}(\Fmap ^3(f,e,d),\Fmap ^3(c,b,a))\quad (\text{for }a,b,c,d,e,f\in\HHHKAlg)
  $$
  We can easily check by hand that this expression vanishes, since the only non-vanishing actions of $\Fmap ^3$ are $\Fmap ^3(c_0,d_0,c_0)$ and $\Fmap ^3(c_1,d_1,c_1)$ and they square to zero. Similarly, for sequences of length one, the $A_\infty$-relation boils down to checking if the images of the basis elements of $\HHHKAlg$ under $\Fmap ^1$ lie in the kernel of $\mu^1_{\FigureEightAlg}$. Checking the relations on the sequences of length strictly between one and six is more tedious. However, it is a finite calculation, so it can be checked by a computer; see the \texttt{C++} script \texttt{functor.cpp} \cite{functor.cpp}. 
\end{proof}

\begin{definition}
  Let \(\FigureEightSubAlg\) be the subalgebra of \(\FigureEightAlg\) generated by \(A_i^k\), \(\hat{A}_i^k\), \(C_i^l\), \(\hat{C}_i^l\), \(P_j^l\) and \(\hat{P}_j^l\) for all \(k\geq0\), \(l\geq1\), \(i\in\{0,1\}\) and \(j\in\{01,10\}\).
\end{definition}

\begin{proposition}\label{prop:quasi-iso}
  The functor \(\Fmap \) is a quasi-isomorphism between the two \(A_\infty\)-algebras \(\HHHKAlg\) and \(\FigureEightAlg\). Furthermore, \(\Fmap \) restricts to a quasi-isomorphism between the two subalgebras \(\HHHKSubAlg\) and \(\FigureEightSubAlg\).
\end{proposition}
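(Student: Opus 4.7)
The plan is to reduce the proposition to a direct computation of the cohomology $H^*(\FigureEightAlg)$ and $H^*(\FigureEightSubAlg)$ with respect to the differential $\mu^1$, and then to verify that $\Fmap^1$ identifies these cohomologies with $\HHHKAlg$ and $\HHHKSubAlg$, respectively. Since Theorem~\ref{thm:HHHK:Algebra} guarantees that $\mu^1_{\HHHKAlg}=0$, we have $H^*(\HHHKAlg)=\HHHKAlg$ as graded vector spaces, and similarly for the subalgebra. It therefore suffices to show that $\Fmap^1$ sends a chosen basis of $\HHHKAlg$ (resp.\ $\HHHKSubAlg$) bijectively to a basis of $H^*(\FigureEightAlg)$ (resp.\ $H^*(\FigureEightSubAlg)$).

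The first step computes $H^*(\FigureEightAlg)$ from the differential rules listed immediately above. The general rule $\mu^1(\hat{X}^k)=X^{k+1}$ for $k\geq 1$ (and every family $X\in\{A_i,B_i,C_i,D_i,P_j,Q_j\}$) makes every $X^l$ with $l\geq 2$ a boundary, while the two exceptional rules $\mu^1(\hat{A}^0_i)=A^1_i+C^1_i$ and $\mu^1(\hat{B}^0_i)=B^1_i+D^1_i$ produce the cohomological identifications $[A^1_i]=[C^1_i]$ and $[B^1_i]=[D^1_i]$. This yields a $12$-dimensional basis of $H^*(\FigureEightAlg)$ given by
\[
\bigl\{A^0_i,\; B^0_i,\; C^1_i,\; D^1_i,\; P^1_j,\; Q^1_j \;\big|\; i\in\{0,1\},\; j\in\{01,10\}\bigr\}.
\]
Comparing with Definition~\ref{fig:Functor}, these twelve classes are exactly the images under $\Fmap^1$ of the basis $\{a_i,b_i,c_i,d_i,p_j,q_j\}$ of $\HHHKAlg$, which by Theorem~\ref{thm:HHHK:Algebra} is also $12$-dimensional. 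Hence $\Fmap^1$ is a vector-space isomorphism on cohomology, which establishes the first claim.

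For the restriction, I would first observe that $\FigureEightSubAlg$ is automatically closed under $\mu^1$ because its generating set includes the $\mu^1$-preimages $\hat{A}^k_i,\hat{C}^l_i,\hat{P}^l_j$ together with all the cycles $A^k_i,C^l_i,P^l_j$ produced as their differentials. Applying the same boundary analysis inside the subalgebra extracts a $6$-dimensional basis $\{A^0_i,\; C^1_i,\; P^1_j\}$ of $H^*(\FigureEightSubAlg)$, and these are precisely the images under $\Fmap^1$ of the generating set of $\HHHKSubAlg$ listed in Definition~\ref{def:HHHKSubAlg}. So the restricted map is a quasi-isomorphism as well.

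I do not anticipate a serious obstacle: once the bases in Figures~\ref{fig:FigureEightAlg:basisI} and~\ref{fig:FigureEightAlg:basisII} are taken as given, the entire argument is linear-algebraic bookkeeping. The only point warranting care is to confirm that these two figures really do exhaust a basis of $\FigureEightAlg$ — i.e.\ that no additional cycles have been overlooked — which can be checked by observing that a general morphism between the two mapping cones is a $2\times 2$ matrix of elements of $\BNAlgH$ and that the listed families cover all entries of the form $S^k$ and $D^l$.
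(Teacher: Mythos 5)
Your proposal is correct and follows essentially the same route as the paper: the paper's proof likewise notes that $\Homology(\HHHKAlg)=\HHHKAlg$ and that the classes $[A^0_i]$, $[B^0_i]$, $[C^1_i]$, $[D^1_i]$, $[P^1_j]$, $[Q^1_j]$ generate $\Homology(\FigureEightAlg)$, so that $\Fmap^1$ induces an isomorphism on homology, with the subalgebra statement following from the definitions. You merely spell out the boundary analysis (and the closure of $\FigureEightSubAlg$ under $\mu^1$) that the paper leaves implicit.
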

\begin{proof}
  From the description of the differentials on $\FigureEightAlg$, it is clear that $\Fmap $ induces an isomorphism on homology. Indeed, $\Homology(\HHHKAlg) = \HHHKAlg $ and
  $$
  \Homology(\Fmap ) 
  \co 
  \HHHKAlg
  \rightarrow 
  \Homology(\FigureEightAlg)
  $$ 
  is an isomorphism since $[A^0_{i}]$, $[B^0_{i}]$, $[C^1_{i}]$, $[D^1_{i}]$, $[P^1_{j}]$ and $[Q^1_{j}]$ for $i\in\{0,1\}$ and $j\in\{01,10\}$ generate $\Homology(\FigureEightAlg)$. The second statement follows from the definitions.
\end{proof}

\section{Proof of the Main Theorem}
\label{sec:proof_of_main_theorem}

Figure~\ref{fig:diagram:Mod:overview} shows an updated version of Figure~\ref{fig:diagram:Mod:woFigureEightAlg} which now includes the quasi-isomorphism $\Fmap $ and the two algebras $\FigureEightAlg$ and $\FigureEightSubAlg$ from Section~\ref{sec:embedding}. 
Definition~\ref{def:HHHKSubAlg} of $\FigureEightSubAlg$ implies that the functor $-\bt \mcIbim$ factors through $\Mod^{\FigureEightSubAlg}$. 
We denote the corresponding tangle invariants in $\Mod^{\FigureEightSubAlg}$ and $\Mod^{\FigureEightAlg}$ by $\DD(T)^{\FigureEightSubAlg}$ and $\DD(T)^{\FigureEightAlg}$, respectively. 
If we can show that the square in the middle of the diagram in Figure~\ref{fig:diagram:Algebras:overview} labelled by $\boxdot$ commutes up to homotopy, then so does the square labelled $\boxdot$ in Figure~\ref{fig:diagram:Mod:overview}; this is the remaining obstacle on the way to the proof of the Main Theorem.

\begin{figure}[ht]
  $$
  \begin{tikzcd}[column sep=1.4cm,row sep=0.4cm]
  &
  \KhTl{T}
  \arrow[mapsto]{r}
  \arrow[d, phantom, "\rotatebox{-90}{$\in$}"]
  &
  \DD(T)
  \arrow[mapsto]{r}
  \arrow[d, phantom, "\rotatebox{-90}{$\in$}"]
  &
  \DD(T)^{\FigureEightSubAlg}
  \arrow[mapsto]{r}
  \arrow[d, phantom, "\rotatebox{-90}{$\in$}"]
  &
  \DD(T)^{\FigureEightAlg}
  \arrow[mapsto]{r}
  \arrow[d, phantom, "\rotatebox{-90}{$\in$}"]
  &
  \DD_1(T)
  \arrow[d, phantom, "\rotatebox{-90}{$\in$}"]
  \\
  &
  \Mod^{\Cobl}
  \arrow[equal]{r}{\sim}
  \arrow{dd}[swap]{H=0}
  \arrow[dd, phantom, ""{name=A}]
  &
  \Mod^{\BNAlgH}
  \arrow{r}{}
  \arrow{dd}[swap]{H=0}
  \arrow[dd, phantom, ""{name=B}]
  &
  \Mod^{\FigureEightSubAlg}
  \arrow[hook]{r}{}
  \arrow[dd, phantom, ""{name=C}]
  &
  \Mod^{\FigureEightAlg}
  \arrow[hook]{r}{}
  \arrow[dd, phantom, ""{name=D}]
  &
  \Mod^{\BNAlgH}
  \arrow[phantom, from=A, to=B, "\square"]
  \arrow[phantom, from=B, to=C, "\boxdot"]
  \arrow[phantom, from=C, to=D, "\square"]
  \\
  T
  \arrow[in=180,out=45,mapsto]{uur}
  \arrow[in=180,out=-45,mapsto]{ddr}
  \arrow[phantom, to=A, "\triangleleft"]
  \\
  &
  \Mod^{\Cobx}
  \arrow[equal]{r}{\sim}
  &
  \Mod^{\BNAlg}
  \arrow[equal]{r}{\sim}
  &
  \Mod^{\HHHKSubAlg}
  \arrow[hook]{r}{}
  \arrow{uu}[left]{\rotatebox{90}{$\sim$}}[right]{\Fmap }
  &
  \Mod^{\HHHKAlg}
  \arrow{uu}[left]{\rotatebox{90}{$\sim$}}[right]{\Fmap }
  \\
  &
  \KhTbx{T}
  \arrow[mapsto]{r}
  \arrow[u, phantom, "\rotatebox{90}{$\in$}"]
  &
  \DD(T)^{\BNAlg}
  \arrow[mapsto]{r}
  \arrow[u, phantom, "\rotatebox{90}{$\in$}"]
  &
  \DD(T)^{\HHHKSubAlg}
  \arrow[mapsto]{r}
  \arrow[u, phantom, "\rotatebox{90}{$\in$}"]
  &
  L_T
  \arrow[u, phantom, "\rotatebox{90}{$\in$}"]
  \end{tikzcd}
  $$
  \caption{An update of Figure~\ref{fig:diagram:Mod:woFigureEightAlg} taking Section~\ref{sec:embedding} into account }\label{fig:diagram:Mod:overview}
\end{figure}

Let us first consider the commutativity of the square $\boxdot$ from Figure~\ref{fig:diagram:Algebras:overview} after composition with the embeddings $\FigureEightSubAlg\hookrightarrow \FigureEightAlg \hookrightarrow \Mod^{\BNAlgH}$. Denote the composition of functors
$$
\BNAlg\cong\HHHKSubAlg\xrightarrow{\Fmap }\FigureEightSubAlg\hookrightarrow \FigureEightAlg\hookrightarrow \Mod^{\BNAlgH}
$$
by $\Ymap$. The induced map on type~D structures is the box tensor product with the type~AD bimodule $\Ybim$ from Figure~\ref{fig:bimodules:Y}. Denote the map 
$$\BNAlgH\xrightarrow{H=0} \BNAlg$$
by $\Qmap$ (‘‘quotienting''). The induced map $\Mod^{\BNAlgH}\xrightarrow{H=0}\Mod^{\BNAlg}$ amounts to box tensoring with the bimodule $\Qbim$ from Figure~\ref{fig:bimodules:Q}. 

\begin{lemma}\label{lem:main}
  The bimodules $\mcIbim$ and $ \Qbim \bt \Ybim$ are homotopy equivalent. In other words, the following diagram commutes up to homotopy:
  $$
  \begin{tikzcd}[column sep=1.7cm,row sep=1.2cm]
  \BNAlgH
  \arrow{d}[swap]{\Qmap}
  \arrow{rr}{\mcImap}
  &&
  \Mod^{\BNAlgH}
  \\
  \BNAlg
  \arrow{rru}[swap]{\Ymap}
  \end{tikzcd}
  $$
\end{lemma}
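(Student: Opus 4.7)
The plan is to construct an explicit $A_\infty$-bimodule homotopy equivalence between $\mcIbim$ and $\Qbim \bt \Ybim$. A direct computation of both sides yields four generators per appropriate idempotent pairing: for $\mcIbim$, these are the two copies of each diagonal generator $\iota_{\DotB}$ and $\iota_{\DotC}$ arising from the mapping cone $[\IdBimodBNAlg \xrightarrow{(-|H)} \IdBimodBNAlg]$; for $\Qbim \bt \Ybim$, they come from tensoring the two generators of $\Qbim$ (one per idempotent) with the two generators of each mapping cone $\Fmap(L_i) = [\DotB \xrightarrow{H} \DotB]$ or $[\DotC \xrightarrow{H} \DotC]$ that define $\Ybim$. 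In both cases the underlying chain complex of right type~D structures is, up to identification of generators, the cone of $H\cdot \id$ on the identity bimodule, so the leading terms match.

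With this matching of generators, I would first define a candidate bimodule morphism $\mathbf{f}\colon \mcIbim \to \Qbim \bt \Ybim$ whose leading term is the obvious identification on generators. Because $\Ybim$ incorporates the non-trivial higher actions $\Fmap^2$ and $\Fmap^3$ of Figure~\ref{fig:Functor:2actions} (which are forced by the asymmetry of $\HHHKAlg$; see Remark~\ref{rem:asymmetry:HHHK}), the morphism $\mathbf{f}$ must carry non-trivial higher components that absorb these higher actions. Since both bimodules are bounded with finitely many generators per idempotent and the leading term is an isomorphism on generators, any such bimodule morphism extending it is automatically a quasi-isomorphism, hence a homotopy equivalence. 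Alternatively, one may also construct an explicit inverse $\mathbf{g}\colon \Qbim \bt \Ybim \to \mcIbim$ together with two $A_\infty$-bimodule homotopies.

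The main obstacle is verifying the $A_\infty$-bimodule relations for $\mathbf{f}$ (and for the homotopies, if one takes the second route). The actions of $\BNAlgH$ on both bimodules can be read off directly from the definitions, but the sequences of algebra elements $S^{k_1}|\cdots|S^{k_n}$ and $D^{l_1}|\cdots|D^{l_m}$ entering the bimodule relations are a priori unbounded. The redeeming point is that the higher actions of $\Fmap$ are supported in a finite range (only $\Fmap^2$ and $\Fmap^3$ are non-zero), and the cone structure of $\mcIbim$ only records a single instance of multiplication by $H$. This forces only finitely many types of sequences to contribute non-trivially to the relations. As with the verification that $\Fmap$ is an $A_\infty$-functor, this finite check can be delegated to a computer script in the spirit of \texttt{functor.cpp}~\cite{functor.cpp}, which confirms that the candidate maps satisfy the required relations and thereby establishes the homotopy equivalence asserted by the lemma.
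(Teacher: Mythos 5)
Your overall strategy---match the four generators, take the identity as the leading term, add higher correction components, and then conclude invertibility from the invertible leading term---is the same shape as the paper's argument, which constructs explicit mutually inverse bimodule maps $f=f_1+f_2$ and $g=g_1+g_2$ with identity components $f_1,g_1$ (and in fact proves the stronger statement that $\mcIbim$ and $\Qbim\bt\Ybim$ are chain \emph{isomorphic}). However, your proposal stops exactly where the real work begins: you never write down the higher components of $\mathbf{f}$ (or $\mathbf{g}$), and these are not an afterthought. Comparing the computed actions of $\Qbim\bt\Ybim$ with those of $\mcIbim$ shows the identity on generators is far from a closed morphism; the needed corrections are infinite families of components out of the generators $\gene{m}$ and $\gene{y}$, of the shape $(S^{2k+2}|S^{2k})$, $(D^{k+2}|D^{k+1})$ and $(S^{2k+3}|S^{2k+1})$ for all $k\geq 0$. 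Constructing these and proving $\partial(f)=0$ is the substantive content of the lemma.

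The second, more serious problem is your claim that the verification ``forces only finitely many types of sequences to contribute'' and can therefore be delegated to a finite computer check in the spirit of \texttt{functor.cpp}. That reduction works for the functor $\Fmap$ because $\HHHKAlg$ is finite-dimensional and the relevant sequences have length at most six; it does not work here. The algebra $\BNAlgH$ is infinite-dimensional, and already the identity AD bimodule (hence $\mcIbim$) has one action $(a|a)$ for every basis element $S^{2k+1}$, $S^{2k+2}$, $D^{k+1}$, so the bimodule relations for $f$ involve infinitely many input sequences organized into one-parameter families; the required cancellations are index-shifting identities such as $(S^{2k+3},S^{2l+1}|S^{2l+2k+2})$ cancelling against $(S^{2k+2},S^{2l+2}|S^{2l+2k+2})$-type terms, which must be checked symbolically in closed form (the paper does this by hand, after cutting the casework down by a symmetry of the bimodules), not by enumerating finitely many cases. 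Your soft argument that a closed morphism with invertible idempotent component is automatically a homotopy equivalence is fine as a shortcut around the explicit inverse, but it only applies once such a closed morphism is exhibited---and exhibiting it, with its infinite family of correction terms and the accompanying cancellation argument, is precisely the gap in your write-up.
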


  \begin{figure}[p]
    \centering
    \begin{subfigure}{0.48\textwidth}
      $$
      \begin{tikzcd}[column sep=2.5cm,row sep=1.8cm,ampersand replacement = \&]
      {}_{\DotB}\gene{t}_{\DotB}
      \arrow[bend left=12]{r}[description]{(S|S)}
      \arrow[in=-150,out=150,looseness=5.5]{rl}[description]{(S^2|D)}
      \arrow[bend left=15, d, "\substack{(-|D) \\ (-|S^2)}" right]
      \&
      {}_{\DotC}\gene{u}_{\DotC}
      \arrow[bend left=12]{l}[description]{(S|S)}
      \arrow[in=-30,out=30,looseness=5.5]{rl}[description]{(S^2|D)}
      \arrow[bend right=15, d, "\substack{(-|D) \\ (-|S^2)}" left ]
      \\
      {}_{\DotB}\gene{k}_{\DotB}
      \arrow[bend left=12]{r}[description]{(S|S)}
      \arrow[in=-150,out=150,looseness=5.5]{rl}[description]{(S^2|D)}
      \arrow[bend left=15, u, "\substack{(S^2,S^2|D) \\ (S,S|1)}" left ]
      \&
      {}_{\DotC}\gene{v}_{\DotC}
      \arrow[bend left=12]{l}[description]{(S|S)}
      \arrow[in=-30,out=30,looseness=5.5]{rl}[description]{(S^2|D)}
      \arrow[bend right=15, u, "\substack{(S^2,S^2|D) \\ (S,S|1)}" right ]
      \end{tikzcd}
      $$
      \caption{$\Ybim$}\label{fig:bimodules:Y}
    \end{subfigure}
    \begin{subfigure}{0.48\textwidth}
      \vspace{20pt}
      $$
      \begin{tikzcd}[column sep=2.4cm, ampersand replacement = \&]
      {}_{\DotB} \gene{z}_{\DotB}
      \arrow[leftarrow,in=145, out=-145,looseness=8]{rl}[description]{\substack{(D|S^2) \\ (S^2|S^2)}}
      \arrow[leftarrow,bend left]{r}[description]{(S|S)}
      \&
      {}_{\DotC} \gene{w}_{\DotC}
      \arrow[leftarrow,bend left]{l}[description]{(S|S)}
      \arrow[leftarrow,in=35, out=-35,looseness=8]{rl}[description]{\substack{(D|S^2) \\ (S^2|S^2)}}
      \end{tikzcd}
      $$
      \vspace{20pt}
      \caption{$\Qbim$}
      \label{fig:bimodules:Q}
    \end{subfigure}
    \\
  \begin{subfigure}{0.48\textwidth}
    \centering
    $$
    \begin{tikzcd}[column sep=2.4cm,row sep=2.4cm]
    {}_{\DotB}\gene{l}_{\DotB}
    \arrow[bend left=12]{r}[above]{(S^{2k+1}|S^{2k+1})}
    \arrow[in=-150,out=150,looseness=5.5]{rl}[above, near start]{\substack{(D^{k+1}|D^{k+1}) \\ (S^{2k+2}|S^{2k+2})}}
    \arrow[d, "\substack{(-|D) \\ (-|S^2)}" description]
    &
    {}_{\DotC} \gene{b}_{\DotC}
    \arrow[bend left=12]{l}[below]{(S^{2k+1}|S^{2k+1})}
    \arrow[in=-30,out=30,looseness=5.5]{rl}[above, very near start]{\substack{(D^{k+1}|D^{k+1}) \\ (S^{2k+2}|S^{2k+2})}}
    \arrow[d, "\substack{(-|D) \\ (-|S^2)}" description]
    \\
    {}_{\DotB}\gene{m}_{\DotB}
    \arrow[bend left=12]{r}[above]{(S^{2k+1}|S^{2k+1})}
    \arrow[in=-150,out=150,looseness=5.5]{rl}[below, near end ]{\substack{(D^{k+1}|D^{k+1}) \\ (S^{2k+2}|S^{2k+2})}}
    &
    {}_{\DotC}\gene{y}_{\DotC}
    \arrow[bend left=12]{l}[below]{(S^{2k+1}|S^{2k+1})}
    \arrow[in=-30,out=30,looseness=5.5]{rl}[below, very near end]{\substack{(D^{k+1}|D^{k+1}) \\ (S^{2k+2}|S^{2k+2})}}
    \end{tikzcd}
    $$
    \caption{$\mcIbim$}
    \label{fig:bimodules:I}
  \end{subfigure}
  \begin{subfigure}{0.48\textwidth}
    \centering
    $$
    \begin{tikzcd}[column sep=2.2cm,row sep=2.4cm]
    {}_{\DotB}\gene{z}\bt \gene{t}_{\DotB}
    \arrow[bend left=12]{r}[above]{(S|S)}
    \arrow[in=-150,out=150,looseness=5.5]{rl}[above, very near start]{\substack{(S^2|D) \\ (D|D)}}
    \arrow[bend left=15, d, "\substack{(-|D) \\ (-|S^2)}" right]
    &
    {}_{\DotC} \gene{w} \bt \gene{u} _{\DotC}
    \arrow[bend left=12]{l}[below]{(S|S)}
    \arrow[in=-30,out=30,looseness=5.5]{rl}[above, near start]{\substack{(S^2|D) \\ (D|D)}}
    \arrow[bend right=15, d, "\substack{(-|D) \\ (-|S^2)}" left ]
    \\
    {}_{\DotB}\gene{z}\bt \gene{k}_{\DotB}
    \arrow[bend left=12]{r}[above]{(S|S)}
    \arrow[in=-150,out=150,looseness=5.5]{rl}[below, very near end]{\substack{(S^2|D) \\ (D|D)}}
    \arrow[bend left=15, u, "\substack{(S,S|1) \\ (D,D|D) \\ (S^2,D|D) \\ (D,S^2|D) \\ (S^2,S^2|D)}" left]
    &
    {}_{\DotC}  \gene{w} \bt \gene{v} _{\DotC}
    \arrow[bend left=12]{l}[below]{(S|S)}
    \arrow[in=-30,out=30,looseness=5.5]{rl}[below, near end]{\substack{(S^2|D) \\ (D|D)}}
    \arrow[bend right=15, u, "\substack{(S,S|1) \\ (D,D|D) \\ (S^2,D|D) \\ (D,S^2|D) \\ (S^2,S^2|D)}" right]
    \end{tikzcd}
    $$
    \caption{$ \Qbim \bt \Ybim $}
    \label{fig:bimodules:QY}
  \end{subfigure}
  \caption{Various bimodules appearing in Lemma~\ref{lem:main}}
  \label{fig:bimodules}
\end{figure}
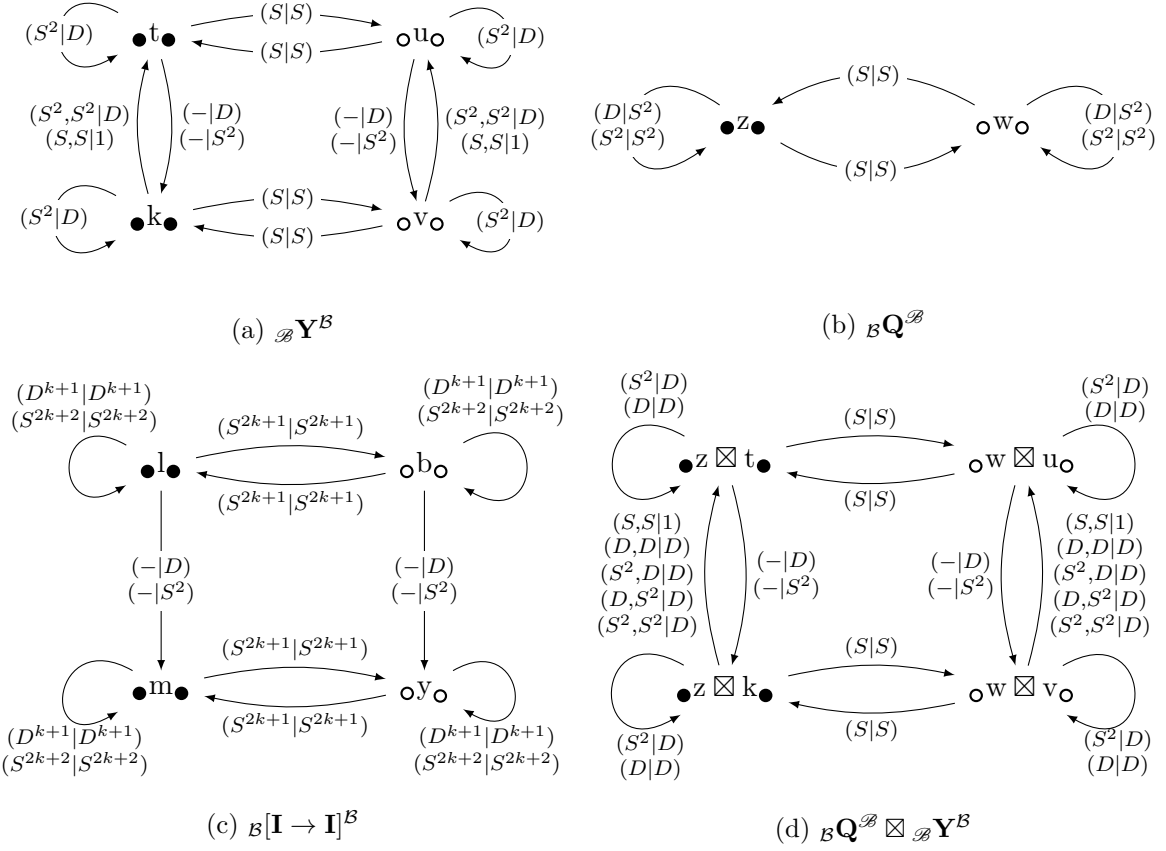
  \begin{figure}[p]
  \centering
  \begin{align*}
  f\co \mcIbim \longrightarrow \Qbim \bt \Ybim &\qquad
  \begin{tikzcd}[ampersand replacement = \&,column sep=1cm,row sep=1cm]
    \gene{l}
    \arrow[rrrrr, bend left=25, pos=0.3, dashed, "1" description]
    \&
    \gene{b}
    \arrow[rrrrr, bend left=25, pos=0.7, dashed, "1" description]
    \&\&\&\&
    \gene{z}\bt \gene{t}
    \&
    \gene{w}\bt \gene{u} 
    \\
    \gene{m} 
    \arrow[rrrrr, bend right=25, pos=0.3, dashed, "1" description]
    \arrow[rrrrru, bend left=20, pos=0.7, "\substack{(S^{2k+2}|S^{2k}) \\ (D^{k+2}|D^{k+1})}" description]
    \arrow[rrrrrru, bend right=20, "(S^{2k+3}|S^{2k+1})" description] 
    \&
    \gene{y}
    \arrow[rrrrr, bend right=25, pos=0.7, dashed, "1" description]
    \arrow[rrrrru,  bend right=8, near end, "\substack{(S^{2k+2}|S^{2k}) \\ (D^{k+2}|D^{k+1})}" description] 
    \arrow[rrrru,  "(S^{2k+3}|S^{2k+1})" description] 
    \&\&\&\&
    \gene{z}\bt \gene{k}
    \&
    \gene{w}\bt \gene{v}  
  \end{tikzcd}
  \\ 
  g\co \Qbim \bt \Ybim  \longrightarrow \mcIbim &\qquad
  \begin{tikzcd}[ampersand replacement = \&,column sep=1cm,row sep=1cm]
    \gene{l}
    \arrow[leftarrow,rrrrrrd, bend right=25, "(S^{2k+3}|S^{2k+1})" description] 
    \&
    \gene{b} 
    \&\&\&\&
    \gene{z}\bt \gene{t}
    \arrow[lllll, bend right=25, pos=0.7, dashed, "1" description]
    \&
    \gene{w}\bt \gene{u}
    \arrow[lllll, bend right=25, pos=0.3, dashed, "1" description]
    \\
    \gene{m}
    \&
    \gene{y}
    \&\&\&\&
    \gene{z}\bt \gene{k} 
    \arrow[lllll, bend left=25, pos=0.7, dashed, "1" description]
    \arrow[lllllu,  bend left=8, near end, "\substack{(S^{2k+2}|S^{2k}) \\ (D^{k+2}|D^{k+1})}" description]
    \arrow[llllu, "(S^{2k+3}|S^{2k+1})" description]
    \& 
    \gene{w}\bt \gene{v}  
    \arrow[lllll, bend left=25, pos=0.3, dashed, "1" description]
    \arrow[lllllu, bend right=20, pos=0.7,, "\substack{(S^{2k+2}|S^{2k}) \\ (D^{k+2}|D^{k+1})}" description]
  \end{tikzcd}  
  \end{align*}
  \caption{The isomorphisms between the type~AD bimodules $\Qbim \bt \Ybim$ and $\mcIbim$. Degree $k$ in actions means summation over non-negative integers.}
  \label{fig:AD_maps}
  \end{figure}

\begin{proof}
  We will prove a stronger statement, namely, that the bimodules $\mcIbim$ and $ \Qbim \bt \Ybim$ are chain isomorphic. The two bimodules are depicted in Figures~\ref{fig:bimodules:I} and~\ref{fig:bimodules:QY}, respectively. The following example explains our shorthand notation for the $\delta^1_{1+j}$-actions of type AD bimodules: 
  $$(\gene{z}\bt \gene{k}) \xrightarrow{(S^2,D| D)} (\gene{z}\bt \gene{t})  ~ \text{ represents an action } ~ S^2 \otimes D \otimes (\gene{z}\bt \gene{k}) \longrightarrow  (\gene{z}\bt \gene{t}) \otimes D ~ \text{ of the }~ \delta^1_{1+2}\text{-map.}$$ 
  Furthermore, $k$ means summation over non-negative integers: for example, the action $(D^{k+1}|D^{k+1})$ represents infinitely many actions, one for each $0\leq k \le \infty$.
  
  In Figure~\ref{fig:AD_maps}, we define two type~AD bimodule maps $f=f_1+f_2$ and $g=g_1+g_2$, where $f_1$ and $g_1$ denote the identity components of these maps indicated by the dashed arrows. 
  We claim that $f$ and $g$ are isomorphisms between the bimodules $\Qbim \bt \Ybim$ and $\mcIbim$. For this we need to check that:
  \begin{enumerate}
  \item $g_2\circ f_2 =0$, 
  $g_2 \circ f_1 =  g_1 \circ f_2$, 
  $g_1 \circ f_1 =\id$, so $(g_1+g_2)\circ(f_1+f_2)=\id_{\mcIbimSimple}$;
  \item $f_2 \circ g_2 =0$, 
  $f_1 \circ g_2  =  f_2 \circ g_1$, 
  $f_1 \circ g_1 =\id$, so $(f_1+f_2)\circ(g_1+g_2)=\id_{\QbimSimple \bt \YbimSimple}$;
  \item $f$ and $g$ are type~AD bimodule homomorphisms, ie $\partial(f)=\partial (g) =0$.
  \end{enumerate}
  We refer the reader to~\cite[Definition~2.2.43]{LOT-bim} for how the composition and the differentials of type~AD bimodule maps are defined.
  
  The first two statements follow in a straightforward way from the diagrams in Figure~\ref{fig:AD_maps}. Before we verify the third statement, observe that the bimodules and maps are symmetrical under switching $\gene{l}\leftrightarrow\gene{b}$,
  $\gene{m}\leftrightarrow\gene{y}$,
  $\gene{z}\boxtimes\gene{t}\leftrightarrow\gene{w}\boxtimes\gene{u}$ 
  and $\gene{z}\boxtimes\gene{k}\leftrightarrow\gene{w}\boxtimes\gene{v}$. So to check that $f$ is a homomorphism, it suffices to verify that the following components of $\partial (f)$ vanish:
  \begin{align*}
  &
  \gene{l}
  \rightarrow\gene{z}\boxtimes\gene{t}
  &&
   \gene{l}
  \rightarrow\gene{w}\boxtimes\gene{u}
  &&
  \underline{\gene{m}
  \rightarrow\gene{z}\boxtimes\gene{t}}
  &&
  \underline{\gene{m}
  \rightarrow\gene{w}\boxtimes\gene{u}}
  \\
  &
  \gene{l}
  \rightarrow\gene{z}\boxtimes\gene{k}
  &&
  \gene{l}
  \rightarrow\gene{w}\boxtimes\gene{v}
  &&
  \gene{m}
  \rightarrow\gene{z}\boxtimes\gene{k}
  &&
  \gene{m}
  \rightarrow\gene{w}\boxtimes\gene{v}
  \end{align*}
  This calculation is very simple in all but the two underlined cases. We consider those separately: the component $\gene{m}
  \rightarrow\gene{z}\boxtimes\gene{t}$ consists of the terms below, where we take the sum over all non-negative $l$ and $k$; the symbol $\partial^*$ represents the second term in the formula of $\partial(f)$ from~\cite[Figure~2]{LOT-bim}.
  \begin{align*}
  \partial^*(\gene{m}
  \rightarrow\gene{z}\boxtimes\gene{t})
  &
  =\partial^*(S^{2k+2}|S^{2k})+\partial^*(D^{k+2}|D^{k+1})
  \\
  \gene{m}
  \rightarrow\gene{z}\boxtimes\gene{t}\rightarrow\gene{z}\boxtimes\gene{t}
  &
  =(S^2,S^{2}|D)+(D,S^{2}|D)+(S^2,D^{k+2}|D^{k+2})+(D,D^{k+2}|D^{k+2})
  \\
  \gene{m}
  \rightarrow\gene{m}
  \rightarrow\gene{z}\boxtimes\gene{t}
  &
  =
  (S^{2},D^{l+1}|D^{l+1})+(D^{k+2},D^{l+1}|D^{l+k+2})+(S^{2k+2},S^{2l+2}|S^{2l+2k+2})
  \\
  \gene{m}
  \rightarrow\gene{z}\boxtimes\gene{k}
  \rightarrow\gene{z}\boxtimes\gene{t}
  &
  =(S,S|1)+(D,D|D)+(S^2,D|D)+(D,S^2|D)+(S^2,S^2|D)
  \\
  \gene{m}
  \rightarrow\gene{w}\boxtimes\gene{u}
  \rightarrow\gene{z}\boxtimes\gene{t}
  &
  =(S,S^{2k+3}|S^{2k+2})
  \\
  \gene{m}
  \rightarrow\gene{y}
  \rightarrow\gene{z}\boxtimes\gene{t}
  &
  =(S^{2k+3},S^{2l+1}|S^{2l+2k+2})
  \end{align*}
  All these contributions cancel. Similarly, the component $\gene{m}
  \rightarrow\gene{w}\boxtimes\gene{u}$ consists of the following terms which also cancel each other:
  \begin{align*}
  \partial^*(\gene{m}
  \rightarrow\gene{w}\boxtimes\gene{u})
  &
  =\partial^*(S^{2k+3}|S^{2k+1})
  \\
  \gene{m}
  \rightarrow\gene{m}
  \rightarrow\gene{w}\boxtimes\gene{u}
  &
  =(S^{2k+3},S^{2l+2}|S^{2l+2k+3})
  \\
  \gene{m}
  \rightarrow\gene{z}\boxtimes\gene{t}
  \rightarrow\gene{w}\boxtimes\gene{u}
  &
  =(S,S^{2k+2}|S^{2k+1})
  \\
  \gene{m}
  \rightarrow\gene{y}
  \rightarrow\gene{w}\boxtimes\gene{u}
  &
  =(S^{2k+2},S^{2l+1}|S^{2l+2k+1})
  \end{align*}
  This finishes the proof that $f$ is a homomorphism. To see that $g$ is a homomorphism, observe that this map is symmetric to $f$ under reversing the direction of all arrows and switching $\gene{l}\leftrightarrow\gene{m}$,
  $\gene{b}\leftrightarrow\gene{y}$,
  $\gene{z}\boxtimes\gene{t}\leftrightarrow\gene{z}\boxtimes\gene{k}$ and 
  $\gene{w}\boxtimes\gene{u}\leftrightarrow\gene{w}\boxtimes\gene{v}$.
  \end{proof}

\begin{corollary}
  The square labelled \(\boxdot\) in Figure~\ref{fig:diagram:Algebras:overview} commutes up to homotopy, and so does the square labelled \(\boxdot\) in Figure~\ref{fig:diagram:Mod:overview}. 
\end{corollary}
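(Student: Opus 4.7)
The plan is to deduce the corollary directly from Lemma~\ref{lem:main} by unpacking definitions and invoking functoriality of the box tensor product; no new calculation is required.

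First, I would match the triangle appearing in the statement of Lemma~\ref{lem:main} against the square $\boxdot$ in Figure~\ref{fig:diagram:Algebras:overview}. By construction, the upper path of that square, composed with the inclusion $\FigureEightSubAlg\hookrightarrow\FigureEightAlg\hookrightarrow\Mod^{\BNAlgH}$, is precisely the $A_\infty$-functor $\mcImap$ corresponding to the bimodule $\mcIbim$; the lower path, composed with the same inclusion, is $\Ymap\circ\Qmap$ by the very definition of $\Ymap$. Lemma~\ref{lem:main} therefore yields a homotopy between these two $A_\infty$-functors $\BNAlgH\to\Mod^{\BNAlgH}$. Since $\FigureEightSubAlg$ is a \emph{full} subcategory of $\Mod^{\BNAlgH}$, the components of any $A_\infty$-natural transformation between functors whose images land in $\FigureEightSubAlg$ automatically lie in $\FigureEightSubAlg$, so the homotopy descends to give homotopy commutativity of the original square.

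For the square $\boxdot$ in Figure~\ref{fig:diagram:Mod:overview}, I would invoke the general fact that the assignment $M\mapsto({-}\boxtimes M)$, sending a type~AD bimodule to the induced $A_\infty$-functor on categories of type~D structures, is functorial with respect to chain homotopies of bimodule maps. The chain isomorphism $\mcIbim\simeq\Qbim\boxtimes\Ybim$ supplied by Lemma~\ref{lem:main} therefore induces a natural homotopy equivalence between the functors $-\boxtimes\mcIbim$ and $({-}\boxtimes\Qbim)\boxtimes\Ybim$ on $\Mod^{\BNAlgH}$, which is exactly the homotopy commutativity of the second square.

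I do not anticipate any real obstacle: all of the computational content has already been absorbed into Lemma~\ref{lem:main}, and the corollary is essentially a formal repackaging. The only sanity checks are that $\mcImap$ really factors through $\FigureEightSubAlg$ (immediate from inspecting where the generators $1_i,S_j,D_j$ of $\BNAlgH$ are sent, namely to $A_i^0,P_j^1,C_i^1$ respectively, all of which lie in $\FigureEightSubAlg$) and that the functor--bimodule correspondence pairs $\mcImap,\Qmap,\Ymap$ with $\mcIbim,\Qbim,\Ybim$ on the nose, which is immediate from the definitions.
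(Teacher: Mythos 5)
Your reduction of both squares to Lemma~\ref{lem:main} has the right overall shape, but the step where you declare the descent to $\FigureEightSubAlg$ automatic rests on a false premise: it is $\FigureEightAlg$, not $\FigureEightSubAlg$, that is a \emph{full} subcategory of $\Mod^{\BNAlgH}$. By definition, $\FigureEightSubAlg$ is the subalgebra of $\FigureEightAlg$ on the same two objects generated only by $A_i^k,\hat{A}_i^k,C_i^l,\hat{C}_i^l,P_j^l,\hat{P}_j^l$; it omits the morphisms $B_i^k,\hat{B}_i^k,D_i^l,\hat{D}_i^l,Q_j^l,\hat{Q}_j^l$, so it is not full. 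Consequently, knowing that both composite functors $\BNAlgH\to\Mod^{\BNAlgH}$ land in $\FigureEightSubAlg$ does \emph{not} imply that a homotopy (or isomorphism) between them has components in $\FigureEightSubAlg$; a priori those components only lie in the full subcategory $\FigureEightAlg$, and that would only give commutativity of the square after composing with $\FigureEightSubAlg\hookrightarrow\FigureEightAlg$. Checking that the homotopy actually lives in $\FigureEightSubAlg$ is precisely the content of the corollary beyond Lemma~\ref{lem:main}, and it is the paper's entire (one-line) proof: one inspects the explicit bimodule isomorphisms $f$ and $g$ from Figure~\ref{fig:AD_maps} and observes that their components --- the identity components together with the components $(S^{2k+2}|S^{2k})$, $(D^{k+2}|D^{k+1})$ and $(S^{2k+3}|S^{2k+1})$, which correspond to identities and to morphisms of the types $\hat{A}_i^k$, $\hat{C}_i^l$, $\hat{P}_j^l$ from Figure~\ref{fig:FigureEightAlg:basisII} --- all lie in $\FigureEightSubAlg$, so $f$ and $g$ factor through $\FigureEightSubAlg$. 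Had the components involved, say, $Q$- or $\hat{B}$-type morphisms, your argument would give nothing about the square as drawn.

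The rest of your proposal is fine and matches the paper: the identification of the two composites with $\mcImap$ and $\Ymap\circ\Qmap$, the check that $\mcImap$ sends the generators of $\BNAlgH$ to $A_i^0$, $P_j^1$, $C_i^1\in\FigureEightSubAlg$, and the passage to the second square by box tensoring, using $-\bt\mcIbim\simeq(-\bt\Qbim)\bt\Ybim$. Note, however, that the square $\boxdot$ in Figure~\ref{fig:diagram:Mod:overview} has $\Mod^{\FigureEightSubAlg}$ (not $\Mod^{\FigureEightAlg}$) as its corner, so the Mod-level statement needs the same factoring observation: it is because $f$ and $g$ have all components in $\FigureEightSubAlg$ that the induced natural homotopy equivalence of functors takes values in $\Mod^{\FigureEightSubAlg}$ rather than merely in $\Mod^{\FigureEightAlg}$ or $\Mod^{\BNAlgH}$.
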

\begin{proof}
  Observe that the chain isomorphisms \(f\) and \(g\) factor through \(\FigureEightSubAlg\). 
\end{proof}

\begin{proof}[Proof of the Main Theorem]
Consider Figure~\ref{fig:diagram:Mod:overview}. The commutativity of this diagram, which we have established in this and the previous two sections, implies that for any pointed 4-ended tangle $T$, the image of $L_T$ under the quasi-isomorphism $\Fmap $ is equal to  $\DD(T)^{\FigureEightAlg}$ up to chain homotopy. Thus the invariants $L_T$ and $\DD(T)^{\FigureEightAlg}$ are equivalent. Moreover, $\Mod^{\FigureEightAlg}$ is a full subcategory of $\Mod^{\BNAlgH}$ which implies that any two objects in $\Mod^{\FigureEightAlg}$ which are chain homotopic as objects in $\Mod^{\BNAlgH}$ are also chain homotopic in $\Mod^{\FigureEightAlg}$. Thus $\DD_1(T)$ is equivalent to $\DD(T)^{\FigureEightAlg}$ and hence to $L_T$. 
\end{proof}

By passing from the category $\Mod^{\FigureEightSubAlg}$ to the category $\Mod^{\FigureEightAlg}$ (or equivalently from $\Mod^{\HHHKSubAlg}$ to $\Mod^{\HHHKAlg}$) there may be a loss of information since there are potentially more chain homotopies in the latter than in the former. Thus we end with:

\begin{question} 
Is $\DD(T)^{\FigureEightSubAlg}$ (equivalently $\DD(T)^{\HHHKSubAlg}$) a stronger tangle invariant than $\DD_1(T)$ (equivalently $L_T$)?
\end{question}

\newcommand*{\arxivPreprint}[1]{ArXiv preprint \href{http://arxiv.org/abs/#1}{#1}}
\newcommand*{\arxiv}[1]{(ArXiv:\ \href{http://arxiv.org/abs/#1}{#1})}
\bibliographystyle{alpha}
\bibliography{comparison}
\end{document}